\newcommand{\R}{\mathbb{R}}
\newcommand{\C}{\mathbb{C}}
\newcommand{\CCC}{\mathcal{C}}
\newtheorem{thm}{Theorem}[section]
\newtheorem{cor}[thm]{Corollary}
\newtheorem{lem}[thm]{Lemma}
\newtheorem{prop}[thm]{Proposition}
\theoremstyle{definition}
\newcommand{\diag}{\mathrm{diag}}
\newcommand{\PS}{\mathrm{PS}}
\newcommand{\ex}{\mathrm{ex}}
\newcommand{\reg}{\mathrm{reg}}
\newcommand{\St}{\mathrm{St}}
\newcommand{\GSp}{\mathrm{GSp}}
\newcommand{\Gl}{\mathrm{Gl}}
\newcommand\nosf[1]{\begin{footnotesize}\textup{\textsf{#1}}\end{footnotesize}}
\DeclareMathOperator{\Hom}{Hom}
\DeclareMathOperator{\Ext}{Ext}
\DeclareMathOperator{\Diff}{d}
\DeclareMathOperator{\ind}{ind}
\setlist[enumerate]{itemsep=-0.5ex plus0.1ex minus 0.2ex}
\setlist[description]{itemsep=-0.5ex plus0.1ex minus 0.2ex}
\setlist[itemize]{itemsep=-0.5ex plus0.1ex minus 0.2ex}
\let\svthefootnote\thefootnote
\newcommand\blankfootnote[1]{%
  \let\thefootnote\relax\footnotetext{#1}%
  \let\thefootnote\svthefootnote%
}
\renewcommand{\Re}{\mathrm{Re}}
\title{\begin{large}Exceptional poles of local spinor $L$-functions of $\mathrm{GSp}(4)$\\ with anisotropic Bessel models\end{large}}
\date{}
\author{Mirko R\"osner\and Rainer Weissauer}
\begin{document}\maketitle

 \begin{abstract}
  For non-cuspidal irreducible admissible representations of $\GSp(4,k)$ over a local non-archimedean field $k$, we determine the exceptional poles of the spinor $L$-factor attached to anisotropic Bessel models by Piatetski-Shapiro.\blankfootnote{2010 \textit{Mathematics Subject Classification.} Primary 22E50; Secondary 11F46, 11F70, 20G05.}
 \end{abstract}

\section{Introduction}
Fix a non-archimedean local field $k$ of characteristic not two. %
For infinite-dimensional irreducible admissible representations $\Pi$ of the group $\GSp(4,k)$ of symplectic similitudes of genus two, a spinor $L$-factor $L^{\PS}(s,\Pi,\Lambda,\mu)$ has been constructed by Piatetski-Shapiro \cite{PS-L-Factor_GSp4}, attached to a choice of a Bessel model $(\Lambda,\psi)$ and a smooth character $\mu$ of $k^\times$.
Bessel models have been classified by Roberts and Schmidt \cite{Roberts-Schmidt_Bessel}.
The $L$-factor factorizes into a regular part and an exceptional part
\begin{equation*}
 L^{\PS}(s,\Pi,\Lambda,\mu)=L^{\PS}_{\reg}(s,\Pi,\Lambda,\mu)L^{\PS}_{\ex}(s,\Pi,\Lambda,\mu)\ .
\end{equation*}
The regular factors were determined by  Dani\c{s}man \cite{Danisman, Danisman2, Danisman3} and by the authors \cite{RW, Subregular}. Concerning the exceptional factor %
Piatetski-Shapiro \cite[thm.~4.2]{PS-L-Factor_GSp4} has shown that if $L^{\PS}_{\ex}(s,\Pi,\Lambda,\mu)$ is non-trivial, then $\mu\otimes\Pi$ admits a non-zero functional, equivariant with respect to an unramified character of a certain subgroup.
He expects that the converse assertion holds as well.

For the case of anisotropic Bessel models and non-cuspidal $\Pi$ we verify this expectation in theorem~\ref{thm:exceptional_poles}.
Furthermore, we classify the non-cuspidal $\Pi$ that admit such functionals, see theorem~\ref{thm:classification_H-functionals}.
In other words, for non-cuspidal irreducible admissible representations of $\GSp(4,k)$ with anisotropic Bessel model we calculate the exceptional factor $L^{\PS}_{\ex}(s,\Pi,\Lambda,\mu)$ as a product of Tate local $L$-factors, see corollary~\ref{cor:final}.

The exceptional factors are given in table~\ref{tab:spinor-L-factor} in the cases where they are non-trivial.
For the irreducible admissible representations $\Pi$ we use the classification symbols of \cite{Sally-Tadic}
and \cite{Roberts-Schmidt}.
The column $\Lambda$ lists the smooth characters that give rise to an anisotropic Bessel model $(\Lambda,\psi)$ under the indicated condition. Here $\chi_{K/k}$ denotes the quadratic character attached to the field extension $K/k$, underlying the Bessel model.
The right column lists the exceptional part of the spinor $L$-factor as a product of Tate $L$-factors.
For each non-cuspidal $\Pi$ that is not displayed in the table, there is either no anisotropic Bessel model or the exceptional factor is trivial.

The analogous question for cuspidal $\Pi$ has been answered by Dani\c{s}man \cite{Danisman_Annals} and for split Bessel models by Weissauer \cite{W_Excep}.
To tie these results together, we define the class of \emph{extended Saito-Kurokawa representations}.
These are the non-generic non-cuspidal $\Pi$ of type \nosf{IIb}, \nosf{Vbcd}, \nosf{VIbcd}, \nosf{XIb} and the non-generic cuspidal $\Pi$ of type \nosf{Va*} and \nosf{XIa*} in the sense of \cite{Roberts-Schmidt_Bessel}.
Only irreducible representations in this class  admit a non-trivial exceptional $L$-factor.
For the convenience of the reader,  these results will be reviewed in section~\ref{s:tables}. 

\begin{table}
\begin{center}
\caption{Exceptional part of Piateskii-Shapiro $L$-factors for anisotropic Bessel models.\label{tab:spinor-L-factor}}
\begin{small}
\begin{tabular}{lllll}
\toprule
Type       & $\Pi\in\CCC_G$         & Condition &  $\Lambda$                 & $L_{\ex}^{\PS}(s,\Pi,\Lambda,\mu)$ \\
\midrule
\nosf{IIb} & $(\chi\circ\det)\rtimes\sigma$ &   & $\chi\sigma\circ N_{K/k}$  & $L(s,\nu^{1/2}\mu\chi\sigma)$  \\
\nosf{Vb}  & $L(\nu^{1/2}\xi \St,\nu^{-1/2}\sigma)$ & $\xi\neq\chi_{K/k}$    & $\sigma\circ N_{K/k}$&$L(s,\nu^{1/2}\mu\sigma)$\\
\nosf{Vc}  & $L(\nu^{1/2}\xi \St,\xi\nu^{-1/2}\sigma)$ & $\xi\neq\chi_{K/k}$ & $\xi\sigma\circ N_{K/k}$&$L(s,\nu^{1/2}\mu\xi\sigma)$\\
\nosf{Vd}  & $L(\nu\xi,\xi\rtimes\nu^{-1/2}\sigma)$ & $\xi=\chi_{K/k}$       & $\sigma\circ N_{K/k}$&$L(s,\nu^{1/2}\mu\sigma)L(s,\nu^{1/2}\mu\xi\sigma)$\\
\nosf{VIb} & $\tau(T,\nu^{-1/2}\sigma)$         &                            & $\sigma\circ N_{K/k}$&$L(s,\nu^{1/2}\mu\sigma)$\\
\nosf{XIb} & $L(\nu^{1/2}\pi,\nu^{-1/2}\sigma)$ & $\pi_{\widetilde{T}}\neq0$ & $\sigma\circ N_{K/k}$&$L(s,\nu^{1/2}\mu\sigma)$\\
\bottomrule
\end{tabular}
\end{small}
\end{center}
\end{table}

\section{Preliminaries}
For the non-archimedean local field $k$ of characteristic not two denote the integers by $\mathcal{O}_k$ and fix a uniformising element $\varpi$. Let $q$ be the cardinality of the residue field $\mathcal{O}_k/\varpi\mathcal{O}_k$.
The valuation character of $k^\times$ is $\nu(x)=\vert x\vert$.
The group of symplectic similitudes over $k$ in four variables (Siegel's notation) is
\begin{equation*}
G = \GSp(4,k)=\{ g\in \Gl(4,k) \mid g^t J g = \lambda(g) \cdot  J\} \ , \qquad J=\begin{pmatrix} 0 & E_2 \\ - E_2 & 0 \end{pmatrix}
\end{equation*}
with symplectic similitude factor $\lambda(g)\in k^\times$ and center $Z_G\cong k^\times$.
Fix a standard symplectic basis $e_1,e_2,e_1^*,e_2^*$ with polarisation $k^4=L_0\oplus L_0^*$ where $L_0=\C e_1\oplus \C e_2$.
Let $P=M\ltimes N$ be the Siegel parabolic subgroup that preserves $L_0$ and let $Q$ be the standard Klingen parabolic that preserves $\C e_1$.
The Weyl group of $G$ has order eight and is generated by \begin{equation*}
\mathbf{s}_1=\begin{pmatrix}0&1&0&0\\1&0&0&0\\0&0&0&1\\0&0&1&0\end{pmatrix}\quad,\qquad
\mathbf{s}_2=\begin{pmatrix}1&0&0&0\\0&0&0&1\\0&0&1&0\\0&-1&0&0\end{pmatrix}\ .
\end{equation*}

Fix a quadratic field extension $K=k(\sqrt{\alpha})$ with a non-square $\alpha\in k^\times$ and integers $\mathcal{O}_K$.
Every $a\in K$ is of the form $a=a_1+a_2\sqrt{\alpha}$ with unique coefficients $a_1,a_2\in k$.
The Galois group acts by the involution sending $a$ to $\overline{a}=a_1-a_2\sqrt{\alpha}$.
The symplectic form on $V=K^2$, $$\left<\left(\begin{smallmatrix}v\\w\end{smallmatrix}\right),\left(\begin{smallmatrix}x\\y\end{smallmatrix}\right)\right>=\tfrac12 \mathrm{tr}_{K/k}(vy-wx)\ ,\qquad v,w,x,y\in K\ ,$$ is preserved  by the natural action of %
$H=\{g\in \Gl(2,K)\mid \det g\in k^\times\}$ on $V$ up to the similitude factor $\det g\in k^\times$.
The symplectic basis $e_1=(1,0)^t$, $e_2=(\sqrt{\alpha},0)^t$, $e_1^*=(0,1)^t$ and $e_2^*=(0,1/\sqrt{\alpha})^t$ in $V$ gives rise to the embedding\footnote{The symplectic form in \cite[\S1(b)]{PSS81} omits the factor $1/2$. For the same $e_1$, $e_2$, this leads to $e_1^*=(0,1/2)^t$ and $e_2^*=(0,1/(2\sqrt{\alpha}))^t$ and thus to correction factors in the embedding \cite[(1.2)]{PSS81}.}
\begin{equation*}H\ni
 \begin{pmatrix}a&b\\c&d\end{pmatrix}\longmapsto
 \begin{pmatrix}
 a_1     & a_2\alpha & b_1     & b_2     \\
 a_2     & a_1     & b_2     & b_1/\alpha\\
 c_1     & c_2\alpha & d_1     & d_2     \\
 c_2\alpha & c_1\alpha & d_2\alpha & d_1     
 \end{pmatrix}\in G\ .
\end{equation*}
We identify $H$ with its image in $G$. Fix the tori $T=\{x_\lambda=\diag(\lambda,\lambda,1,1)\in G\mid\lambda\in k^\times\}$ and
$$\widetilde{T}=\left\{ \widetilde{t}_a=
\diag(a,\overline{a})=
\begin{pmatrix}
a_1 & a_2\alpha&0       &0   \\
a_2 & a_1    &0       &0   \\
0   & 0      &a_1     &-a_2\\
0   & 0      &-a_2\alpha&a_1
\end{pmatrix}\in G
\mid a\in K^\times \right\}\ .$$
The similitude factor of $\widetilde{t}_a$ is $\lambda_G(\widetilde{t}_a)=a_1^2-a_2^2\alpha=N_{K/k}(a)\in k^\times$. 
\begin{equation*}
\widetilde{N}=H\cap N=\left\{
\begin{pmatrix}
1 & 0 & b_1 & b_2\\
0 & 1 & b_2 & b_1/\alpha\\
0 & 0 & 1   & 0\\
0 & 0 & 0   & 1
\end{pmatrix}\mid b\in K\right\}
\end{equation*}
is the unipotent radical of the standard Borel $B_H=T\widetilde{T}\widetilde{N}$ of $H$. Its complement $S$ in $N$ is the centralizer of $\widetilde{T}N$
\begin{equation*}
 S=C_G(\widetilde{T}N)=\left\{
 \begin{pmatrix}
 1 & 0 & c& 0\\
 0 & 1 & 0&-c/\alpha\\
 0 & 0 & 1& 0\\
 0 & 0 & 0& 1
 \end{pmatrix}
 \mid c\in k\right\}\quad, \qquad N\cong \widetilde{N}\oplus S\ .
\end{equation*}

\subsection{Smooth representations}

For a locally compact totally disconnected group $X$ the modulus character $\delta_X:X\to\R_{>0}$ is defined by
$$\int_X f(xx_0) d x = \delta_X(x_0) \int_X f(x) d x\ ,\qquad x,x_0\in X\ ,\quad f\in C^\infty_c(X)$$ with respect to a left-invariant Haar measure on $X$.
We denote by $\CCC_X$ the abelian category of smooth complex-valued linear representations of $X$. %
For a closed subgroup of $Y\subseteq X$ with a smooth representation $\sigma$ of $Y$,
the \emph{unnormalized compact induction} of $\sigma$ is the representation $\ind_Y^X(\sigma)\in\CCC_X$,
given by the right-regular action of $X$ on the vector space of functions
$f:X\to \C$ that satisfy $f(yxk) = \sigma(y)f(x)$ for every $y\in Y, x\in X$ and $k$ in some open compact subgroup of $X$ and such that $f$ has compact support modulo $Y$.
This defines an exact functor $\ind_Y^X$ from $\CCC_Y$ to $\CCC_X$.
\begin{lem}[Frobenius reciprocity]\label{lem:Frobenius}
For finite-dimensional smooth representations $\rho$ of $X$ and arbitrary smooth representations $\sigma$ of $Y$, there is a natural isomorphism
$$\Hom_X(\ind_Y^X(\sigma),\rho) \cong \Hom_Y(\sigma,\rho\otimes \delta_Y\delta_X^{-1})\ .$$
\end{lem}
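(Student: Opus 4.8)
The plan is to reduce the statement to the classical Frobenius reciprocity for full smooth induction $\Ind_Y^X$ (the right adjoint of restriction, $\Hom_X(\tau,\Ind_Y^X\tau')\cong\Hom_Y(\tau|_Y,\tau')$, valid for arbitrary smooth $\tau,\tau'$) by passing to contragredients, using the finite-dimensionality of $\rho$ crucially. Write $\tilde\rho$ for the smooth contragredient. Since $\rho$ is finite-dimensional one has $\rho^{*}=\tilde\rho$ (a smooth representation) and $\tilde{\tilde\rho}=\rho$, so transposition of maps gives a natural isomorphism $\Hom_X(\ind_Y^X\sigma,\rho)\cong\Hom_X(\tilde\rho,\widetilde{\ind_Y^X\sigma})$: finite-dimensionality guarantees that the transpose of an $X$-equivariant map into $\rho$ lands in the smooth part of the dual and that double dualization is the identity on $\rho$, while $\ind_Y^X\sigma$ itself need not be admissible.

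The second ingredient is the duality formula $\widetilde{\ind_Y^X\sigma}\cong\Ind_Y^X(\tilde\sigma\otimes\delta_Y\delta_X^{-1})$, which I would establish (or quote as standard) via the pairing $(f,g)\mapsto\int_{Y\backslash X}\langle f(x),g(x)\rangle\,dx$ between $\ind_Y^X\sigma$ and $\Ind_Y^X(\tilde\sigma\otimes\chi)$. The integrand transforms on the left under $Y$ by the character $\chi$, so the quotient integral over $Y\backslash X$ is well defined and $X$-invariant precisely when $\chi=\delta_Y\delta_X^{-1}$; non-degeneracy of the pairing, together with the fact that every smooth functional on $\ind_Y^X\sigma$ is "locally supported" and hence comes from such a $g$, identifies $\Ind_Y^X(\tilde\sigma\otimes\delta_Y\delta_X^{-1})$ with the full smooth dual of $\ind_Y^X\sigma$. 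Combined with classical Frobenius reciprocity this yields $\Hom_X(\ind_Y^X\sigma,\rho)\cong\Hom_Y(\tilde\rho|_Y,\tilde\sigma\otimes\delta_Y\delta_X^{-1})$.

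It remains to rewrite the right-hand side. For a character $\chi$ and the finite-dimensional $\tilde\rho$, a $Y$-map $\tilde\rho\to\tilde\sigma\otimes\chi$ is the same as a $Y$-map $\tilde\rho\otimes\chi^{-1}\to\tilde\sigma$, which by the identity $\Hom_Y(A,\tilde\sigma)\cong\Hom_Y(\sigma,\tilde A)$ (valid for any smooth $\sigma$ and any finite-dimensional smooth $A$, since a $Y$-invariant bilinear form on $A\times\sigma$ is automatically smooth in the first variable) is the same as a $Y$-map $\sigma\to\widetilde{\tilde\rho\otimes\chi^{-1}}=\rho\otimes\chi$. Taking $\chi=\delta_Y\delta_X^{-1}$ gives $\Hom_Y(\sigma,\rho\otimes\delta_Y\delta_X^{-1})$, and every isomorphism in the chain is natural in $\sigma$ and $\rho$.

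The main obstacle is the second step: pinning down that the twist is exactly $\delta_Y\delta_X^{-1}$ and not its inverse or some other combination, i.e. keeping the modulus characters and the normalization of the quotient integral over $Y\backslash X$ consistent with the chosen conventions for $\ind_Y^X$ (inducing action on the left, open compact subgroup and support condition on the right); this is also the only place where the precise shape of the answer is produced. As a sanity check one may take $Y=P=MN$ a parabolic in a reductive $X=G$, where $\ind=\Ind$, $\delta_X=1$ and $\delta_Y\delta_X^{-1}=\delta_P$, and verify the outcome against the known fact that the trivial representation is a quotient of $\Ind_P^G\delta_P$. An alternative, more hands-on route avoids contragredients entirely: from a $Y$-map $S\colon\sigma\to\rho\otimes\delta_Y\delta_X^{-1}$ one sets $T(f)=\int_{Y\backslash X}\rho(x)^{-1}S(f(x))\,dx$ (a finite sum, as $f$ is locally constant with support compact modulo $Y$) and checks directly that $T$ is a well-defined $X$-map, with inverse $T\mapsto\big(v\mapsto T(f_{v,K})\big)$ for suitable approximating functions $f_{v,K}$ concentrated near $Y$; this version does not need $\rho$ finite-dimensional but requires the same care with the modulus.
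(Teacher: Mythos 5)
Your argument is correct and is essentially the one the paper relies on: the paper simply cites Bernstein--Zelevinsky (their 2.29, i.e.\ the duality between compact and full smooth induction combined with classical Frobenius reciprocity) and adds precisely your key observation that the algebraic dual of a finite-dimensional smooth representation is again smooth, so dualizing back and forth in $\rho$ is harmless. Your modulus bookkeeping also comes out right for the paper's convention $\int_X f(xx_0)\,dx=\delta_X(x_0)\int_X f(x)\,dx$: the invariant quotient functional lives on sections transforming by $\delta_Y\delta_X^{-1}$, exactly as your sanity check with the trivial quotient of $\Ind_P^G(\delta_P)$ suggests.
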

\begin{proof}
See \cite[2.29]{Bernstein-Zelevinsky76}. The algebraic dual of a finite-dimensional smooth representation is always smooth.
\end{proof}

\subsection{Bessel models and zeta integrals}
An \emph{anisotropic Bessel datum} in the standard form of \cite{PSS81} is a pair of smooth characters $\Lambda$ of $\widetilde{T}$ and $\psi$ of $N=\widetilde{N}\oplus S$, where $\psi$ is trivial on $\widetilde{N}$ and non-trivial on $S$.
Such a Bessel datum is said to provide an \emph{anisotropic Bessel model} $(\Lambda,\psi)$ to an irreducible admissible representation $\Pi$ of $G$, if there is a non-zero $\widetilde{T}N$-equivariant Bessel functional $\ell\in\Hom_{\widetilde{T}N}(\Pi,\Lambda\boxtimes\psi)$. At least for non-cuspidal $\Pi$, such an $\ell$ is unique up to scalars %
and a classification of the Bessel models has been given by Roberts and Schmidt \cite[thms.~6.2.2, 6.3.2]{Roberts-Schmidt_Bessel}.
The \emph{Bessel function} attached to $v\in \Pi$ is $W_v(g)=\ell(\Pi(g)v)$ for $g\in G$.

Fix an anisotropic Bessel model $(\Lambda,\psi)$ of $\Pi$ and a smooth character $\mu$ of $k^\times$. Attached to $v\in\Pi$ and a Schwartz-Bruhat function $\Phi\in C_c^\infty(K^2)$  is the zeta-function
\begin{align*}
 Z^{\PS}(s,v,\Lambda,\Phi,\mu)=\int_{\widetilde{N}\backslash H}W_v(h)\Phi((0,1)h)\mu(\lambda_G(h))
 |\lambda_G( h)|^{s+\tfrac12}\Diff h\ .
\end{align*}
The integral converges for sufficiently large $\Re(s)>0$ and admits a unique meromorphic continuation to the complex plane.
The spinor $L$-factor $L^{\PS}(s,\Pi,\Lambda,\mu)$ of Piatetski-Shapiro and Soudry \cite{PS-L-Factor_GSp4, PSS81} is defined as the regularization $L$-factor of these zeta-functions varying over $\Phi$ and $v$.

The \emph{regular factor} $L_{\reg}^{\PS}(s,\Pi,\Lambda,\mu)$ is the regularization $L$-factor of the family of zeta-functions $Z^{\PS}(s,v,\Lambda,\Phi,\mu)$ that are subject to the condition $\Phi(0,0)=0$.
It coincides  with the regularization $L$-factor of the \emph{regular zeta-functions}
\begin{equation*}Z_{\reg}^{\PS}(s,W_v,\mu)=\int_{T} W_v(x_\lambda) \mu(\lambda)|\lambda|^{s-\tfrac32} \Diff x_\lambda\ ,\qquad v\in\Pi\,,\end{equation*}
again convergent for sufficiently large $\Re(s)>0$ with unique meromorphic continuation, see \cite[thm.~4.1]{PS-L-Factor_GSp4}, \cite[prop.~2.5]{Danisman}.
The \emph{exceptional factor} $L_{\ex}^{\PS}(s,\Pi,\Lambda,\mu)$ is the regularization $L$-factor of the family of meromorphic functions
\begin{equation*}
 Z^{\PS}(s,v,\Lambda,\Phi,\mu)/L^{\PS}_{\reg}(s,\Pi,\Lambda,\mu)\ ,
\end{equation*}
varying over $v$ and $\Phi$. It suffices to consider $\Phi=\Phi_0$, the characteristic function of $\mathcal{O}_K\times\mathcal{O}_K$.
Attached to each smooth character $\Lambda$ of $\widetilde{T}$ is the \emph{Bessel functor} $\beta_\Lambda$ that sends $\Pi\in\CCC_G$ to the coinvariant quotient $\widetilde{\Pi}=\Pi_{\widetilde{T}\widetilde{N},\Lambda}\in\CCC_{TS}$ on which $\widetilde{T}\widetilde{N}$ acts by $\Lambda$.
The action of $TS$ on $\Pi$ carries over to $\widetilde{\Pi}$ because $\widetilde{T}\widetilde{N}$ is normalized by $TS$.
 Since $\widetilde{T}\widetilde{N}$ is compactly generated modulo center, the Bessel functor is exact.
We use the same notation for the functor sending an $H$-module $\pi\in\CCC_H$ to the coinvariant quotient $\widetilde{\pi}\in\CCC_T$ on which $\widetilde{T}\widetilde{N}$ acts by $\Lambda$.
Whenever $(\Lambda,\psi)$ provides a Bessel model for an irreducible $\Pi\in\CCC_G$, the Bessel functional $\ell$ factorizes over $\widetilde{\Pi}$.
\begin{lem}\label{Bessel_module_perfect}
For irreducible admissible $\Pi\in\CCC_G$ and every smooth character $\Lambda$ of $K^\times$, the Bessel module $\widetilde{\Pi}$ has finite length. If $(\Lambda,\psi)$ yields an anisotropic Bessel model for $\Pi$, then $\widetilde{\Pi}$ is perfect in the sense of \cite[lemma~3.13]{RW}.
\end{lem}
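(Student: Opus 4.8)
The plan is to exploit that $TS$ is an ``$ax+b$''-type group $k^\times\ltimes k$, the torus $T$ acting on the one-parameter unipotent group $S$ through the similitude character and hence transitively on the nontrivial characters of $S$. Every smooth $TS$-module $V$ therefore sits in the canonical Bernstein--Zelevinsky exact sequence
\begin{equation*}
0\longrightarrow \ind_S^{TS}(\psi)\otimes V_{S,\psi}\longrightarrow V\longrightarrow V_S\longrightarrow 0\ ,
\end{equation*}
where $V_S$ denotes the smooth $T$-module of $S$-coinvariants, inflated along $TS\to T$, and $V_{S,\psi}$ the vector space of $(S,\psi)$-coinvariants; see \cite{Bernstein-Zelevinsky76}. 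Since there is up to isomorphism exactly one infinite-dimensional irreducible smooth $TS$-module, the Gelfand--Kazhdan module $\tau=\ind_S^{TS}(\psi)$, the module $V$ has finite length over $TS$ as soon as $V_S$ has finite length over $T$ and $V_{S,\psi}$ is finite-dimensional. The whole argument is an application of this with $V=\widetilde\Pi$.

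For the first derivative: as $\widetilde T$ lies in the Siegel Levi $M$, commutes with $S$, and $N=\widetilde N\oplus S$, the subgroup $\widetilde T\widetilde N S$ equals $\widetilde T N$, so $\widetilde\Pi_S=(\Pi_{\widetilde T\widetilde N,\Lambda})_S=\Pi_{\widetilde T N,\Lambda}=(\Pi_N)_{\widetilde T,\Lambda}$, the $\Lambda$-coinvariant quotient along $\widetilde T$ of the Jacquet module $\Pi_N$ with respect to the Siegel unipotent radical. If $\Pi$ is cuspidal then $\Pi_N=0$, so $\widetilde\Pi_S=0$; otherwise $\Pi_N$ is admissible of finite length over $M\cong\Gl(2,k)\times k^\times$, and $\widetilde T$ maps into $M$ by $a\mapsto(A_a,N_{K/k}(a))$, where $a\mapsto A_a$ is the regular representation of the anisotropic maximal torus $K^\times\subseteq\Gl(2,k)$. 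For an irreducible subquotient $\sigma\boxtimes\chi$ of $\Pi_N$ the space $(\sigma\boxtimes\chi)_{\widetilde T,\Lambda}$ has algebraic dual $\Hom_{K^\times}(\sigma,\Lambda\cdot(\chi\circ N_{K/k})^{-1})$, which is at most one-dimensional by the multiplicity-one theorem of Tunnell and Saito; running over the finitely many subquotients, $\widetilde\Pi_S$ is finite-dimensional, a fortiori of finite length over $T$. For the second derivative, $\widetilde\Pi_{S,\psi}=\Pi_{\widetilde T N,\Lambda\boxtimes\psi}$ has algebraic dual the space $\Hom_{\widetilde T N}(\Pi,\Lambda\boxtimes\psi)$ of anisotropic Bessel functionals, finite-dimensional by the classification of Roberts and Schmidt \cite{Roberts-Schmidt_Bessel} (and at most one-dimensional for non-cuspidal $\Pi$). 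Hence $\widetilde\Pi$ has finite length over $TS$, which is the first assertion.

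Now suppose $(\Lambda,\psi)$ provides an anisotropic Bessel model for $\Pi$. Then $\widetilde\Pi_{S,\psi}\ne0$, and by uniqueness of the Bessel functional it is one-dimensional, spanned by the image of $\ell$; in particular $\tau$ occurs in $\widetilde\Pi$ with multiplicity one. To deduce that $\widetilde\Pi$ is perfect in the sense of \cite[lemma~3.13]{RW}, I would check the conditions of that lemma against the displayed Bernstein--Zelevinsky sequence, using the finite-dimensionality of both $\widetilde\Pi_S$ and $\widetilde\Pi_{S,\psi}$, the one-dimensionality of the latter, and the exactness of the Bessel functor $\beta_\Lambda$; the last point lets one pass to a composition series of $\Pi$ and so reduce to the standard modules $\Ind_P^G\sigma$, whose Bessel modules have been computed in \cite{RW, Subregular} and \cite{Danisman, Danisman2, Danisman3}, where the structure required by \cite[lemma~3.13]{RW} can be read off.

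The step I expect to be the main obstacle is precisely this verification of perfectness. The finite-length assertion is soft once the two derivatives are under control, whereas ``perfect'' is a structural statement about how the subquotient $\widetilde\Pi_S$ and the $\tau$-isotypic part are glued inside $\widetilde\Pi$, constraining the extension in the displayed sequence and not merely its outer terms. Here the hypothesis that $(\Lambda,\psi)$ is a genuine model — the non-vanishing of $\widetilde\Pi_{S,\psi}$ and the uniqueness of $\ell$, not just its finite-dimensionality — together with explicit knowledge of the Bessel modules of the standard modules, is what must be brought to bear.
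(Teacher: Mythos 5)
Your proof of the finite-length assertion is essentially the paper's own argument: the Gel'fand--Kazhdan/Bernstein--Zelevinsky exact sequence for the ``$ax+b$''-group $TS$ \cite[5.12.d--f]{Bernstein-Zelevinsky76}, finite-dimensionality of $\widetilde\Pi_S$ via the Siegel--Jacquet module and the Waldspurger--Tunnell(--Saito) multiplicity-one theorem, and finite-dimensionality of $\widetilde\Pi_{(S,\psi)}$ via Roberts--Schmidt \cite{Roberts-Schmidt_Bessel}. That half is correct and matches the paper.

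The second assertion, however, is not proved in your proposal: you only announce a plan (``check the conditions of \cite[lemma~3.13]{RW} against the Bernstein--Zelevinsky sequence, reduce to standard modules via exactness of $\beta_\Lambda$, read off the structure'') and you yourself flag this as the main obstacle. This is a genuine gap, and the sketched reduction would not go through as stated: exactness of the Bessel functor only exhibits $\widetilde\Pi$ as a quotient (or subquotient) of the Bessel module of a standard module, and perfectness is not a property inherited by subquotients. What perfectness buys later (proposition~\ref{prop:K_H_spherical_Pi}) is $\dim\widetilde\Pi_{(T,\chi)}=1$ for \emph{every} smooth character $\chi$ of $k^\times$; in terms of your exact sequence this is a statement about how the Gel'fand--Kazhdan submodule $\ind_S^{TS}(\psi)\otimes\widetilde\Pi_{S,\psi}$ and the finite-dimensional quotient $\widetilde\Pi_S$ are glued --- for each $T$-character occurring in $\widetilde\Pi_S$ the relevant boundary map must be nonzero, i.e.\ the extension must be suitably non-split --- and this cannot be read off from the two outer terms alone, nor from knowing the constituents of the Bessel module of the inducing standard module. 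The paper disposes of this point by invoking Dani\c{s}man's result \cite[prop.~4.7]{Danisman}, which establishes exactly this perfectness for anisotropic Bessel modules; your argument would need either that citation or a genuine proof of the non-splitting, which is missing.
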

\begin{proof}
The twisted coinvariant quotient $\widetilde{\Pi}_{(S,\psi)}$ is finite-dimensional, see Roberts and Schmidt \cite[thm.~6.3.2 and lemma~7.1.1]{Roberts-Schmidt_Bessel}.
The coinvariant quotient $\widetilde{\Pi}_S$ factorizes over the Siegel-Jacquet module and is thus finite-dimensional by the theorem of Waldspurger and Tunnell.
Hence $\widetilde{\Pi}$ has finite length by the theory of Gel'fand and Kazhdan \cite[5.12.d--f]{Bernstein-Zelevinsky76}.
Dani\c{s}man has shown that $\widetilde{\Pi}$ is perfect \cite[prop.~4.7]{Danisman}.
\end{proof}

\subsection{Double coset decompositions}

\begin{lem}\label{lem:double_cosets}
There are disjoint double coset decompositions $G=QH=PH\sqcup P\mathbf{s}_2H$.
\end{lem}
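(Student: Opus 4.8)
The plan is to realize both quotients as spaces of isotropic subspaces of $k^4=V=K^2$ and to exploit that $H$ contains $\mathrm{SL}(2,K)$ and acts $K$-linearly on $V=K^2$. First I would prove $G=QH$. Since the symplectic form on $k^4$ is alternating, every line is isotropic, so $\Sp(4,k)$, and hence $G$, is transitive on lines; as $Q$ is the full stabilizer of $ke_1$, the assignment $Qg\mapsto g^{-1}(ke_1)$ is a $G$-equivariant bijection of $Q\backslash G$ onto the set of $k$-lines of $V$. It therefore suffices that $H$ be transitive on the $k$-lines of $V$, and for this it is enough that the subgroup $\mathrm{SL}(2,K)\subseteq H$ already be transitive on $V\setminus\{0\}$: given $0\neq v\in V$, extend $v$ to a $K$-basis $(v,v')$ of $V$ and rescale $v'$ so that the matrix with columns $v,v'$ has determinant $1$; this matrix lies in $\mathrm{SL}(2,K)$ and sends $e_1$ to $v$. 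Hence $G=Q\cdot\mathrm{SL}(2,K)\subseteq QH$.

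Next I would identify $P\backslash G$ with the set $\mathrm{LG}(V)$ of Lagrangian (two-dimensional totally isotropic) $k$-subspaces of $V$ via $Pg\mapsto g^{-1}L_0$, where $L_0=ke_1\oplus ke_2=Ke_1$ is a $K$-line; then $P\backslash G/H$ is the set of $H$-orbits on $\mathrm{LG}(V)$, with $P\mathbf{s}_2$ corresponding to $\mathbf{s}_2^{-1}L_0=ke_1\oplus ke_2^*$. Because $H$ acts $K$-linearly, the property of being a $K$-subspace is $H$-invariant; a two-dimensional $K$-subspace of $V$ is precisely a $K$-line, and every $K$-line $Kv$ is totally isotropic, since $\langle v,\lambda v\rangle=\tfrac12\mathrm{tr}_{K/k}(\lambda\cdot 0)=0$, hence Lagrangian. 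By the transitivity from the first step, $H$ acts transitively on the set of all $K$-lines; this set is therefore the $H$-orbit of $L_0$ and corresponds to the double coset $PH$. It then remains to show that the Lagrangians which are \emph{not} $K$-subspaces form a single $H$-orbit; and since $ke_1\oplus ke_2^*$ is not a $K$-subspace (it does not contain $\sqrt{\alpha}\,e_1$), that orbit is the one of $P\mathbf{s}_2$.

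The final and main step is to handle a Lagrangian $W\in\mathrm{LG}(V)$ that is not a $K$-subspace. Fix a $k$-basis $w_1,w_2$ of $W$; these vectors are $K$-linearly independent — otherwise $W$ would contain, hence equal, the $K$-line through $w_1$ — so $(w_1,w_2)$ is a $K$-basis of $V$. Introduce the $K$-bilinear form $\{\binom{x_1}{x_2},\binom{y_1}{y_2}\}=x_1y_2-x_2y_1$ on $V=K^2$, so that $\langle\cdot,\cdot\rangle=\tfrac12\mathrm{tr}_{K/k}\{\cdot,\cdot\}$; it is nondegenerate with $\{e_1,e_1^*\}=1$, and $\{gv,gv'\}=(\det g)\{v,v'\}$ for $g\in\Gl(2,K)$. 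The Lagrangian condition $\langle w_1,w_2\rangle=0$ forces $\{w_1,w_2\}$ to have trace zero, hence $\{w_1,w_2\}=c\sqrt{\alpha}$ with $c\in k^\times$ (nonzero, since $(w_1,w_2)$ is a $K$-basis). Now let $h\in\Gl(2,K)$ be the linear map with $hw_1=e_1$ and $hw_2=\sqrt{\alpha}\,e_1^*$; then $\{hw_1,hw_2\}=\sqrt{\alpha}\{e_1,e_1^*\}=\sqrt{\alpha}$, which also equals $(\det h)\,c\sqrt{\alpha}$, so $\det h=c^{-1}\in k^\times$, i.e.\ $h\in H$, while $hW=ke_1\oplus k\sqrt{\alpha}\,e_1^*=ke_1\oplus ke_2^*$. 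Therefore every such $W$ lies in the $H$-orbit of $ke_1\oplus ke_2^*$, the two orbits exhaust $\mathrm{LG}(V)$, and we conclude $QH=G=PH\sqcup P\mathbf{s}_2H$. The one genuinely delicate point is this last determinant bookkeeping: sending $w_2$ to $e_1^*$ rather than to $\sqrt{\alpha}\,e_1^*$ would give $\det h\notin k^\times$, and the twist by $\sqrt{\alpha}$ is exactly what keeps $h$ inside $H$ — a reflection of the fact that $H$ is the similitude group of $\langle\cdot,\cdot\rangle$ but not of the $K$-bilinear form $\{\cdot,\cdot\}$.
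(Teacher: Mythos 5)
Your proof is correct, and while it shares the paper's overall frame---reduce $G=QH$ to transitivity of $H$ on $V\setminus\{0\}$, and identify $P\backslash G/H$ with the $H$-orbits of the Lagrangians $g^{-1}L_0$---the decisive step is carried out by a genuinely different argument. The paper classifies $L=g^{-1}L_0$ by the rank of the projection $p_{34}$ restricted to $L$ and, in each of the three cases, moves $L$ to $L_0$ or to $\mathbf{s}_2^{-1}L_0$ by explicitly constructed elements of $\widetilde{T}$, $\widetilde{N}$ and $H$, reading off disjointness from the possible ranks on the $PH$-orbit. You instead exploit the $K$-structure: the dichotomy is whether $L$ is a $K$-line or not, the first alternative being exactly the $H$-orbit of $L_0=Ke_1$ (which also gives disjointness, since being a $K$-subspace is $H$-invariant and $\mathbf{s}_2^{-1}L_0=ke_1\oplus ke_2^*$ is not one), and in the second alternative you produce $h\in H$ in one stroke by prescribing $hw_1=e_1$, $hw_2=\sqrt{\alpha}\,e_1^*$ and checking $\det h\in k^\times$ via $\langle\cdot,\cdot\rangle=\tfrac12\mathrm{tr}_{K/k}\{\cdot,\cdot\}$ together with the fact that $\{w_1,w_2\}$ is a nonzero trace-zero element of $K$ (here the hypothesis $\mathrm{char}\,k\neq2$ enters). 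This buys a shorter, coordinate-free treatment of the main case and makes the disjointness transparent, at the mild cost of invoking the standard identifications of $Q\backslash G$ with the set of lines and of $P\backslash G$ with the set of Lagrangians (i.e.\ that $Q$ and $P$ are the full stabilizers and that $G$ acts transitively on these sets), which the paper's hands-on computation avoids; conversely, you actually prove the transitivity of $\mathrm{SL}(2,K)\subseteq H$ on $V\setminus\{0\}$, which the paper only asserts. One cosmetic slip: when $w_1,w_2$ are $K$-linearly dependent, $W$ is \emph{contained in}, hence by dimension count equal to, the $K$-line $Kw_1$ (not the other way around); this does not affect the argument.
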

\begin{proof}
$H$ acts transitively on $V\setminus \{0\}$, so for every $g\in G$ there is $h\in H$ with $h^{-1}e_1=g^{-1}e_1$, hence $hg^{-1}$ is an element of $Q$.
This shows $G=QH$ and it remains to find representatives of $P\backslash G/H$.
Let $p_{34}:V\to L_0^*$ be the projection with respect to the fixed polarization. Fix $g\in G$ and write $L=g^{-1}L_0$. The dimension of $p_{34} L$ is zero, one or two.

If $\dim p_{34}L=0$, then $L=L_0$ and thus $g\in P\subseteq PH$ by definition of $P$.
If $\dim p_{34}L=1$, then $L$ is generated by $v_1\in L_0+\lambda_1e_1^\ast+\lambda_2e_2^\ast$ and $v_2=\lambda_2e_1-\lambda_1e_2$ for some $(\lambda_1,\lambda_2)\in k^2\setminus \{0\}$.
Since $\widetilde{T}$ acts transitively on $L_0^\ast\setminus \{0\}$, there is $a\in K^\times$ with $\widetilde{t}_av_1\in L_0+e_2^\ast$ and $\widetilde{t}_av_2=e_1$.
This implies $\widetilde{n}\widetilde{t}_av_1=e_2^\ast$ and $\widetilde{n}\widetilde{t}_av_2=e_1$ for suitable $\widetilde{n}\in\widetilde{N}$,
so $\mathbf{s}_2\widetilde{n}\widetilde{t}_a g^{-1}\in P$ and thus $g\in P\mathbf{s}_2 H$.
Finally, if $\dim p_{34}L=2$, then $L$ is generated by $v_1=v_{11}e_1+v_{12}e_2+e_1^\ast$ and $v_2=v_{21}e_1+v_{22}e_2+e_2^\ast$ for certain $v_{ij}\in k$ with $v_{21}=v_{12}$, because $\left<v_1,v_2\right>=0$.
Applying $\widetilde{n}=\left(\begin{smallmatrix}E_2&-A\\0&E_2\end{smallmatrix}\right)\in \widetilde{N}$ with $A=\left(\begin{smallmatrix} v_{22}\alpha & v_{21}\\v_{12} & v_{22}\end{smallmatrix}\right)$
yields $\widetilde{n} v_1=\lambda e_1 + e_1^\ast$ and $\widetilde{n}v_2=e_2^\ast$ with $\lambda=v_{11}-v_{22}\alpha$.
If $\lambda=0$, then $\widetilde{n} v_1$ and $\widetilde{n} v_2$ generate $L_0^\ast$, hence $J\widetilde{n}L= L_0$ and this means $g\in PJ\widetilde{N}\subseteq PH$.
If $\lambda\neq0$, apply $h=\left(\begin{smallmatrix}E_2&0\\-B&E_2\end{smallmatrix}\right)\in H$
with $B=\diag(\lambda^{-1},\lambda^{-1}\alpha)$
to obtain $h\widetilde{n}v_1=\lambda e_1$ and
$h\widetilde{n}v_2=e_2^\ast$.
This implies $\mathbf{s}_2h\widetilde{n}L\subseteq L_0$, hence $g\in P\mathbf{s}_2 H$.

The decomposition is disjoint, because $\dim p_{34}(phL_0)$ is either zero or two for $p\in P$, $h\in H$,
so $\mathbf{s}_2\notin PH$.
\end{proof}

\begin{lem}\label{lem:double_coset_intersections}
$Q\cap H=TZ_G\widetilde{N}$ and $P\cap H=T\widetilde{T}\widetilde{N}=B_H$ and 
\begin{equation*}
 P\cap \mathbf{s}_2H\mathbf{s}_2^{-1} = \left\{\begin{pmatrix}a&b & 0 & 0\\ c&d&0&0\\0&0&d&-c\\0&0&-b&a \end{pmatrix}\in G\mid ad-bc\in k^\times\right\} \cong \Gl(2,k)\ .
\end{equation*}
\end{lem}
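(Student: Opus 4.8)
The plan is to identify each of the three intersections with the stabiliser, for the action of $H$ on $V=K^2$, of an explicit $k$-subspace of $V$, and then to read off the matrix shape from the $\Gl(2,K)$-action. Recall that $P=\mathrm{Stab}_G(L_0)$, that $Q$ is the stabiliser of the line $ke_1$, and that in the coordinates $V=K^2$ one has $L_0=K\times 0$ and $ke_1=k\times 0$, while $h=\left(\begin{smallmatrix}a&b\\c&d\end{smallmatrix}\right)\in H$ sends $(v,w)^t$ to $(av+bw,\,cv+dw)^t$. Thus ``$h$ preserves a given $k$-subspace of $V$'' becomes a set of linear conditions on $a,b,c,d$.

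First I would dispose of $P\cap H$ and $Q\cap H$. An $h\in H$ preserves $L_0=K\times 0$ iff $c=0$, so $P\cap H=\{\left(\begin{smallmatrix}a&b\\0&d\end{smallmatrix}\right):a,d\in K^\times,\ b\in K,\ ad\in k^\times\}$. Writing a general upper-triangular element of $H$ as $x_\lambda\widetilde t_{a_0}\widetilde n_{b_0}=\left(\begin{smallmatrix}\lambda a_0&\lambda a_0 b_0\\0&\overline{a_0}\end{smallmatrix}\right)$, the equations $\overline{a_0}=d$, $\lambda a_0=a$ force $a_0=\overline d$ and $\lambda=a/\overline d=ad/N_{K/k}(d)$, which lies in $k^\times$ exactly because $ad\in k^\times$ and $N_{K/k}$ is $k^\times$-valued; hence $P\cap H=T\widetilde T\widetilde N=B_H$. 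Likewise, $h$ preserves $ke_1=k\times 0$ iff $c=0$ and $a\in k^\times$, whence $ad\in k^\times$ gives $d\in k^\times$ as well; comparing with $x_\lambda z\,\widetilde n_{b_0}=\left(\begin{smallmatrix}\lambda z&\lambda z b_0\\0&z\end{smallmatrix}\right)$, where $z$ runs through $Z_G=\{\diag(z,z):z\in k^\times\}$, identifies $Q\cap H$ with $TZ_G\widetilde N$. Both steps are pure bookkeeping; the one thing to keep in mind is that the similitude constraint cutting $H$ out of $\Gl(2,K)$ is precisely what shrinks the diagonal torus to $T\widetilde T$ (resp. to $TZ_G$ for the $k$-rational line).

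For the third identity I would move $H$ aside by conjugation: $P\cap\mathbf{s}_2H\mathbf{s}_2^{-1}=\mathbf{s}_2\bigl(H\cap\mathbf{s}_2^{-1}P\mathbf{s}_2\bigr)\mathbf{s}_2^{-1}$, with $\mathbf{s}_2^{-1}P\mathbf{s}_2=\mathrm{Stab}_G(\mathbf{s}_2^{-1}L_0)=\mathrm{Stab}_G(ke_1\oplus ke_2^*)$. Since $e_1=(1,0)^t$ and $e_2^*=(0,1/\sqrt\alpha)^t$ in $V=K^2$, one has $ke_1\oplus ke_2^*=k\times k\sqrt\alpha$, and the criterion above says $h=\left(\begin{smallmatrix}a&b\\c&d\end{smallmatrix}\right)\in H$ preserves it iff $a,d\in k$ and $b,c\in k\sqrt\alpha$; so $H\cap\mathbf{s}_2^{-1}P\mathbf{s}_2=\{\left(\begin{smallmatrix}a&\beta\sqrt\alpha\\\gamma\sqrt\alpha&d\end{smallmatrix}\right):a,\beta,\gamma,d\in k,\ ad-\beta\gamma\alpha\in k^\times\}$. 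Substituting this into the embedding formula and conjugating back by $\mathbf{s}_2$ --- the one genuine matrix computation in the proof --- yields the block-diagonal matrix $\diag\bigl(M,\ \mathrm{adj}(M)^t\bigr)$ with $M=\left(\begin{smallmatrix}a&\beta\\\gamma\alpha&d\end{smallmatrix}\right)$, which is exactly the asserted set once one renames $b=\beta$ and $c=\gamma\alpha$. Since $\mathrm{adj}(M)^t=\det(M)(M^t)^{-1}$ and $M\mapsto\diag(M,\mathrm{adj}(M)^t)$ is a group homomorphism, this simultaneously gives the isomorphism with $\Gl(2,k)$. The only delicate point anywhere is the bookkeeping of the scalars $\sqrt\alpha$ and $\alpha$ coming from the basis vectors $e_2=(\sqrt\alpha,0)^t$, $e_2^*=(0,1/\sqrt\alpha)^t$ and from the explicit embedding $H\hookrightarrow G$.
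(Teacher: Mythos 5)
Your proof is correct: identifying $P\cap H$, $Q\cap H$ and $\mathbf{s}_2^{-1}P\mathbf{s}_2\cap H$ with stabilisers in $H\subseteq\Gl(2,K)$ of the $k$-subspaces $K\times 0$, $k\times 0$ and $k\times k\sqrt{\alpha}$ of $V=K^2$ is exactly the routine verification the paper leaves out (its proof is just ``this is straightforward''), and your conjugation by $\mathbf{s}_2$ does yield the asserted block matrix $\diag\bigl(M,\mathrm{adj}(M)^t\bigr)$ with $b=\beta$, $c=\gamma\alpha$, giving the isomorphism with $\Gl(2,k)$.
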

\begin{proof}
This is straightforward.
\end{proof}

\section{$H$-functionals give rise to exceptional poles}

Fix the standard maximal compact subgroup $K_H=H\cap \Gl(2,\mathcal{O}_K)$ of $H$.

\begin{lem}\label{lem:KH_sph_in_coinvariants}
Let $\pi=\ind_{B_H}^H(\chi\boxtimes1)$ be the induced representation for an unramified character $\chi$ of $T$ and the trivial character of $\widetilde{T}\widetilde{N}$, then the subspace of $K_H$-invariant vectors in $\pi$ has non-trivial image under the projection $\pi\to \widetilde{\pi}_{(T,\rho_i)}$ for the $T$-characters $\rho_1 = \chi$ and $\rho_2 = \nu^2\chi^{-1}$.
\end{lem}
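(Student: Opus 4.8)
The plan is to compute the Bessel module $\widetilde\pi$ of $\pi=\ind_{B_H}^H(\chi\boxtimes1)$ explicitly as a $T$-module and then to track the image of the spherical vector. Since $\chi$ is unramified, the Iwasawa decomposition $H=B_HK_H$ shows that $\pi^{K_H}=\C v_0$ is one-dimensional, spanned by the spherical function $v_0$ normalised by $v_0|_{K_H}\equiv1$; so it suffices to prove that $v_0$ has non-zero image in $\widetilde\pi_{(T,\rho_1)}$ and in $\widetilde\pi_{(T,\rho_2)}$. In the $\Gl(2,K)$-picture $\widetilde N$ is the unipotent radical of the upper triangular Borel $B_H=T\widetilde T\widetilde N$ and $H$ is reductive, so Bruhat gives $H=B_H\sqcup B_H w B_H$ with $w=\left(\begin{smallmatrix}0&1\\1&0\end{smallmatrix}\right)$. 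Restricting $\pi$ to $B_H$ and using the geometric lemma of Bernstein and Zelevinsky yields a short exact sequence of $B_H$-modules $0\to\pi_{\mathrm{open}}\to\pi\to\C_{\chi\boxtimes1}\to0$, where the closed-cell quotient is the restriction-to-$B_H$ map $f\mapsto f|_{B_H}$ and $\pi_{\mathrm{open}}\cong\ind_{T\widetilde T}^{B_H}\!\big({}^{w}(\chi\boxtimes1)\big)$ consists of the functions supported on the open cell. Applying the exact Bessel functor $\beta_\Lambda$ with $\Lambda$ trivial produces $0\to\C_{\rho_2}\to\widetilde\pi\to\C_{\rho_1}\to0$ in $\CCC_T$; a direct computation of $\delta_{B_H}$ — it restricts to $\nu^2$ on $T$ and to the trivial character on $\widetilde T$, because $T$ scales $\widetilde N\cong K$ by $\lambda$ while $\widetilde T$ acts with norm-one similitude — identifies $\rho_1=\chi$ (closed cell, $=(\chi\boxtimes1)|_T$) and $\rho_2=\nu^2\chi^{-1}$ (open cell, $={}^w(\chi\boxtimes1)\cdot\delta_{B_H}$ restricted to $T$). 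Whenever $\chi^2\neq\nu^2$ one has $\rho_1\neq\rho_2$, the sequence splits, and each $\widetilde\pi_{(T,\rho_i)}$ is one-dimensional.

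For $\rho_1$ the assertion is immediate: the composite $\pi\twoheadrightarrow\C_{\chi\boxtimes1}\to(\C_{\chi\boxtimes1})_{\widetilde T\widetilde N}=\C_{\rho_1}$ is, up to a scalar, $f\mapsto f(1)$, which sends $v_0\mapsto v_0(1)=1\neq0$. As this map is $T$-equivariant it factors through $\widetilde\pi\to\widetilde\pi_{(T,\rho_1)}\twoheadrightarrow\C_{\rho_1}$, so $v_0$ has non-zero image in $\widetilde\pi_{(T,\rho_1)}$.

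For $\rho_2$ the projection $\pi\to\widetilde\pi\to\widetilde\pi_{(T,\rho_2)}$ detects the open-cell constituent of the Jacquet module, and its value on $v_0$ is — up to the normalising factor fixed above — the Gindikin--Karpelevich integral $\int_{\widetilde N}v_0(w^{-1}\widetilde n)\,d\widetilde n$. Writing $\widetilde n=\widetilde n(b)$ with $b\in K$, the integrand equals $1$ for $b\in\mathcal O_K$ (where $w^{-1}\widetilde n(b)\in K_H$) and, by the Iwasawa factorisation $w^{-1}\widetilde n(b)=\mathrm{diag}(b^{-1},b)\cdot\widetilde n(\cdot)\cdot\kappa$ with $\kappa\in K_H$, a power of $|N_{K/k}(b)|$ for $|b|_K>1$; the integral converges for the inducing parameter in a suitable half-plane, continues meromorphically, and summing the geometric series over the shells $\{|b|_K=q^{-n}\}_{n<0}$ evaluates it as a ratio of Tate factors, namely $(1-\chi(\varpi)^2)/(1-q^2\chi(\varpi)^2)$ in the unramified case, with the parallel computation in the ramified case. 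This is non-zero away from the finite set of unramified $\chi$ with $\chi^2=1$, and since $\widetilde\pi_{(T,\rho_2)}$ is then one-dimensional, $v_0$ survives in it. At the remaining degenerate parameters (those with $\chi^2=1$, together with $\chi^2=\nu^2$ from the first step) the principal series $\pi$ is reducible: one identifies its Steinberg- and Langlands-type constituents, reads off $\widetilde{\pi_0}$ for the spherical constituent $\pi_0$ using exactness of $\beta_\Lambda$, and checks the image of $v_0$ by the same evaluation and integral directly. The main obstacle is exactly this last step: the bookkeeping of $\delta_{B_H}$ needed to land on the characters $\chi$ and $\nu^2\chi^{-1}$ on the nose, and the explicit Gindikin--Karpelevich evaluation together with the treatment of the exceptional unramified $\chi$.
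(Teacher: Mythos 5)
Your route is genuinely different from the paper's. The paper identifies $\pi$ with the restriction to $H$ of an unramified principal series $\mu\times\mu^{-1}$ of $\Gl(2,K)$, passes to its Kirillov model, and reads off the two exponents from the asymptotics of the explicit spherical Whittaker function; you instead filter $\pi|_{B_H}$ by Bruhat cells, compute $\widetilde{\pi}$ as an extension of $\C_{\chi}$ by $\C_{\nu^2\chi^{-1}}$, and detect the two coinvariant quotients by evaluation at $1$ (closed cell) and by a Gindikin--Karpelevich integral (open cell). The bookkeeping you flag as the main obstacle does check out: $\delta_{B_H}|_T=\nu^2$, $\delta_{B_H}|_{\widetilde{T}}=1$ (because $\widetilde{t}_a$ scales $\widetilde{N}\cong K$ by $a/\overline{a}$, of norm one), the closed-cell evaluation handles $\rho_1$, and the open-cell integral on $v_0$ evaluates to $(1-\chi(\varpi)^2)/(1-q^2\chi(\varpi)^2)$ for $K/k$ unramified, resp.\ $(1-\chi(\varpi))/(1-q\chi(\varpi))$ for $K/k$ ramified. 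Two simplifications are worth noting. First, you do not need the splitting or one-dimensionality of $\widetilde{\pi}_{(T,\rho_2)}$ at all: the integral is a $\widetilde{T}\widetilde{N}$-invariant, $(T,\rho_2)$-equivariant functional, hence factors through $\widetilde{\pi}_{(T,\rho_2)}$, and its nonvanishing on $v_0$ already gives the claim. Second, in the only case the paper uses (proposition~\ref{prop:K_H_spherical_Pi}, where $\chi=\delta_{B_H}|_T=\nu^2$, so $\rho_1=\nu^2$, $\rho_2=1$) the integral converges absolutely, so no meromorphic continuation or identification of a continued functional is needed; for that case your argument is complete and arguably more self-contained than the paper's.

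There is, however, a genuine problem with your final step, the deferred ``degenerate parameters.'' The case $\chi^2=\nu^2$ is harmless (then $\rho_1=\rho_2$ and evaluation at $1$ already settles it), but the case $\chi^2=1$ cannot be repaired ``by the same evaluation directly,'' because there the conclusion for $\rho_2$ is actually false. Indeed, whenever $\chi\circ N_{K/k}=1$ (i.e.\ $\chi=1$, or $\chi=\chi_{K/k}$ with $K/k$ unramified), the function $h\mapsto\chi(\det h)$ lies in $\pi$ and is the spherical vector $v_0$; it spans a one-dimensional $H$-subrepresentation, so its image in $\widetilde{\pi}$ is a $T$-eigenvector of eigenvalue $\rho_1=\chi\neq\rho_2$, which dies in $\widetilde{\pi}_{(T,\rho_2)}$. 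The vanishing of your Gindikin--Karpelevich factor at exactly these $\chi$ is therefore not an artifact of continuation or reducibility bookkeeping but reflects genuine failure of the statement (the paper's own proof is also not watertight there, since the map to the Kirillov model of $\mu\times\mu^{-1}$ has the one-dimensional subrepresentation in its kernel at those parameters). So as written your proposal proves the lemma for all unramified $\chi$ with $\chi^2\neq1$ --- which covers the application --- but the promised treatment of the remaining $\chi$ should be replaced by the observation that those cases must be excluded (or are never used), not checked.
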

\begin{proof}
Fix an unramified character $\mu$ of $K^\times$ that coincides with $\nu^{-1}\chi$ on $k^\times\cong T$.
It is easy to see that $\pi$ is isomorphic to the restriction to $H$ of the induced representation $\mu\times\mu^{-1}$ of $\Gl(2,K)$.
We identify $\mu\times\mu^{-1}$ with its Kirillov model $\mathcal{K}\subseteq C_b^\infty(K^\times)$ in the complex vector space of smooth complex-valued functions on $K^\times$ with bounded support.
The subspace $\mathcal{K}^{\Gl(2,\mathcal{O}_K)}$ of $\Gl(2,\mathcal{O}_K)$-invariant vectors is one-dimensional and generated by the spherical Whittaker function $g_0\in \mathcal{K}$,
\begin{equation*}
 g_0(a)= \begin{cases}
               \nu_K^{1/2}\mu(a)-\nu_K^{1/2}\mu^{-1}(a) & \vert a\vert_K\leq1 \ ,\\
               0                                        & \vert a\vert_K>1    \ ,
              \end{cases}
\end{equation*}
where $\nu_K=\nu\circ N_{K/k}$ is the valuation on $K$, compare Bump \cite[\S4.(6.30)]{Bump}.
The unnormalized Jacquet quotient $\mathcal{K}_{\widetilde{N}}$ is given by the quotient of $\mathcal{K}$ by the subspace $C_c^\infty(K^\times)$ of functions with compact support, compare \cite[lemma~3.3]{RW}.
The action of $\widetilde{T}$ on $\mathcal{K}_{\widetilde{N}}$ is trivial because the center acts trivially and $\mu$ is unramified.
Therefore the image of $g_0$ in the $T$-module $\mathcal{K}_{\widetilde{T}\widetilde{N}}$ is determined by its asymptotic behaviour for $a\to 0$.
For small $a$ however, $g_0(a)$ is a non-trivial linear combination of the characters $\rho'_1=\nu^{1/2}_K\mu$ and $\rho'_2=\nu^{1/2}_K\mu^{-1}$ of $T'=\{\diag(a,1)\mid a\in K^\times\}$, so the image of $g_0$ in $(\mathcal{K}_{\widetilde{T}\widetilde{N}})_{(T',\rho'_i)}$ is non-zero for $i=1,2$.
The restriction of $\rho'_i$ to $k^\times$ is $\rho_i$.
This shows that the composition of the inclusions and projections
 $$\mathcal{K}^{\Gl(2,\mathcal{O}_K)}\hookrightarrow\pi^{K_H} \hookrightarrow \pi \to  \widetilde{\pi}_{(T,\rho_i)}\to (\mathcal{K}_{\widetilde{T}\widetilde{N}})_{(T',\rho_i')}$$
is non-zero for $i=1,2$.
\end{proof}

\begin{prop}\label{prop:K_H_spherical_Pi}
Fix an infinite-dimensional irreducible admissible representation $\Pi\in\CCC_G$ and let $\widetilde{\Pi}=\beta_\Lambda(\Pi)$ for $\Lambda=1$.
If there is a non-trivial $H$-equivariant functional $\Pi\to \C$,
then the natural morphism from $\Pi^{K_H}$ to $\widetilde\Pi_{(T,\rho)}$ is non-zero for the $T$-characters $\rho=1$ and $\rho=\nu^2$.
\end{prop}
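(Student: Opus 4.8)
The plan is to handle the two characters $\rho=1$ and $\rho=\nu^2$ by different means. Fix a non-zero $f\in\Hom_H(\Pi,\C)$. Since $K_H\subseteq H$ is compact, $f$ is $K_H$-invariant, hence vanishes on the summand $\Pi(K_H)$ of $\Pi=\Pi^{K_H}\oplus\Pi(K_H)$; so $\Pi^{K_H}\neq 0$, and we may fix $v_0\in\Pi^{K_H}$ with $f(v_0)\neq 0$. For $\rho=1$ this already suffices: because $\widetilde{T}\widetilde{N}\subseteq B_H\subseteq H$, the functional $f$ is $\widetilde{T}\widetilde{N}$-invariant and factors through $\widetilde{\Pi}=\Pi_{\widetilde{T}\widetilde{N},1}$, and, being $T$-invariant, through the unnormalized coinvariants $\widetilde{\Pi}_{(T,1)}$; since $f(v_0)\neq 0$, the image of $\Pi^{K_H}$ in $\widetilde{\Pi}_{(T,1)}$ is non-zero.

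For $\rho=\nu^2$ I would first reduce matters to Lemma~\ref{lem:KH_sph_in_coinvariants}. The modulus character $\delta_{B_H}$ is trivial on $\widetilde{T}\widetilde{N}$ and restricts to $\nu^2$ on $T$, whence $(\widetilde{\Pi}_{(T,\nu^2)})^{*}=\Hom_{B_H}(\Pi,\delta_{B_H})=\Hom_H(\Pi,\pi)$ with $\pi:=\Ind_{B_H}^H(\nu^2\boxtimes 1)$, the last identity being Frobenius reciprocity for smooth induction ($B_H\backslash H$ is compact). Now $\pi$ is precisely the representation of Lemma~\ref{lem:KH_sph_in_coinvariants} with $\chi=\nu^2$: its space of $K_H$-invariants is one-dimensional, spanned by a spherical vector $\phi_0$, and the image of $\phi_0$ in $\widetilde{\pi}_{(T,\nu^2)}$ (the exponent $\rho_1=\chi$ there) is non-zero. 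Hence it suffices to exhibit a non-zero $H$-morphism $g\colon\Pi\to\pi$ with $g(v_0)\neq 0$: then $g(v_0)$ is a non-zero multiple of $\phi_0$, so applying the exact Bessel functor and passing to $(T,\nu^2)$-coinvariants, the image of $v_0$ in $\widetilde{\Pi}_{(T,\nu^2)}$ maps to a non-zero multiple of the image of $\phi_0$ in $\widetilde{\pi}_{(T,\nu^2)}$, hence is itself non-zero.

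To construct $g$ I would realize $\Pi$ geometrically. Since $G$ and $H$ are unimodular, $\Hom_G(\Pi,\Ind_H^G\C)=\Hom_H(\Pi,\C)$, and $G$-irreducibility of $\Pi$ turns the class of $f$ into a $G$-embedding $\iota\colon\Pi\hookrightarrow C^\infty(H\backslash G)$ given by $\iota(w)(x)=f(\Pi(x)w)$, with $\iota(v_0)$ right-$K_H$-invariant. Using the double coset decomposition $G=PH\sqcup P\mathbf{s}_2H$ of Lemma~\ref{lem:double_cosets}, in which $P\mathbf{s}_2H$ is the open and dense double coset, together with the stabilizers $P\cap H=B_H$ and $P\cap\mathbf{s}_2H\mathbf{s}_2^{-1}\cong\Gl(2,k)$ from Lemma~\ref{lem:double_coset_intersections} (and using that $T,\widetilde{T},\widetilde{N},S$ all lie in $P$), one restricts functions to the open cell, which embeds $\Pi$, as a $P$-module, into a smoothly induced module from the open-cell stabilizer $\Gl(2,k)$; applying the exact Bessel functor and passing to $(T,\nu^2)$-coinvariants then produces $g$, and reduces the computation of $g(v_0)$ to the $\Gl(2,k)$-analogue of the spherical-vector computation in the proof of Lemma~\ref{lem:KH_sph_in_coinvariants}. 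The step I expect to be the main obstacle is exactly this last one: one must verify that the $(T,\nu^2)$-exponent of $\widetilde{\Pi}$ is genuinely produced by the open cell, without cancellation against the closed-cell contribution governed by $B_H$, and that $\iota(v_0)$ has non-zero component there; this is where the finite length and perfectness of $\widetilde{\Pi}$ (Lemma~\ref{Bessel_module_perfect}) and the explicit Kirillov-model computation behind Lemma~\ref{lem:KH_sph_in_coinvariants} enter.
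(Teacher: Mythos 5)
Your treatment of $\rho=1$ is correct and is in fact simpler than the paper's uniform argument: an $H$-invariant functional is automatically trivial on $\widetilde{T}\widetilde{N}$ and on $T$, hence factors through $\widetilde{\Pi}_{(T,1)}$, and it is non-zero on $\Pi^{K_H}$ because it kills $\Pi(K_H)$. Your reduction for $\rho=\nu^2$ is also the right one and agrees with the paper's setup: by Frobenius reciprocity $\Hom_H(\Pi,\pi)\cong\Hom_T(\widetilde\Pi,\delta_{B_H})$ with $\pi=\ind_{B_H}^H(\delta_{B_H})$, and by Lemma~\ref{lem:KH_sph_in_coinvariants} it suffices to produce an $H$-morphism $g:\Pi\to\pi$ that is non-zero on $\Pi^{K_H}$.

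The genuine gap is that you never produce such a $g$, and this is the entire difficulty. Since $\Hom_H(\Pi,\pi)$ is one-dimensional (perfectness of $\widetilde\Pi$ gives $\dim\widetilde{\Pi}_{(T,\chi)}=1$ for all $\chi$), the only candidate is the morphism $\varphi$ already guaranteed by Frobenius reciprocity, and because $\St_H^{K_H}=0$ while $\pi^{K_H}$ is one-dimensional, "$\varphi$ is non-zero on $\Pi^{K_H}$" is equivalent to "$\varphi$ does not have image $\St_H$", i.e.\ to surjectivity of $\varphi$. Your geometric plan (embedding $\Pi$ into $C^\infty(H\backslash G)$, restricting to the open cell $P\mathbf{s}_2H$, and hoping the $(T,\nu^2)$-exponent of $\widetilde\Pi$ is produced there with $\iota(v_0)$ contributing non-trivially) is exactly the point you flag as the "main obstacle", and no argument is offered for it; a priori the exponent $\nu^2$ could be carried entirely by the constituent $\St_H$-type contribution, which is what must be excluded. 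The paper excludes it by a contradiction argument that your sketch has no substitute for: assuming $\mathrm{im}\,\varphi=\St_H$, one combines this with the given $H$-functional to get $0\to U\to\Pi\to 1_H\oplus\St_H\to 0$, uses exactness of the Bessel functor together with perfectness of $\widetilde\Pi$ (via the long exact sequence identifying the derived functor of $(T,\chi)$-coinvariants with $(T,\chi)$-invariants) to show $\dim\widetilde{U}_{(T,\chi)}=1$, hence gets a second map $U\to\pi$, and then rules out both possibilities for its image using $\dim\Ext_{\CCC_{H/Z_G}}(1_H\oplus\St_H,\St_H)=\dim\Ext_{\CCC_{H/Z_G}}(1_H\oplus\St_H,1_H)=1$ (Orlik, Schneider--Stuhler), since either case would force $\widetilde{\Pi}$ to surject onto a $T$-module containing $\nu^2$ or $1$ with multiplicity two, contradicting one-dimensionality of the coinvariant spaces. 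Without this (or an equivalent) argument, the case $\rho=\nu^2$ remains unproved.
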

\begin{proof}
Since $\Pi$ admits a non-trivial $H$-functional, the center $Z_G$ acts trivially. Consider $\Pi$ as a smooth representation of the semisimple group $H/Z_G$ in the natural way.
$\widetilde{\Pi}$ is perfect by lemma~\ref{Bessel_module_perfect}, so $\dim\widetilde{\Pi}_{(T,\chi)}=1$ for every smooth character $\chi$ of $k^{\times}$.
Recall that $\pi=\ind_{B_H}^H(\delta_{B_H})$ is the unique non-split extension $0\to\St_H\to\pi\to 1_H\to 0$
of the trivial representation $1_H$ of $H$ by the special representation $\St_H$ of $H$.
By dual Frobenius reciprocity \cite[I.(35)]{Cartier}
\begin{gather*}
 \dim \Hom_H(\Pi,\pi)
 =\dim \Hom_T(\widetilde{\Pi},\delta_{B_H})=1\ ,
\end{gather*} so there is a non-zero $H$-equivariant morphism $\varphi:\Pi\to \pi$.
We claim that $\varphi$ is surjective.
Indeed, assume it is not.
Then the image of $\varphi$ is the unique $H$-submodule $\St_H$ of $\pi$.
Since $\Pi$ admits a non-trivial $H$-functional, there is an exact sequence in $\CCC_H$
\begin{equation*}
0\to U\to \Pi\to 1_H\oplus \St_H\to 0
\end{equation*}
with $U=\ker(\Pi\to1_H\oplus \St_H)$.
The exact Bessel functor $\beta_\Lambda$ yields an exact sequence in $\CCC_T$
\begin{equation*}
 0\to \widetilde{U}\to \widetilde\Pi\to 1\oplus \nu^2 \to 0\ .
\end{equation*}
For every smooth character $\chi$ of $k^\times$, the left-derived functor of $(T,\chi)$-coinvariants is the functor of $(T,\chi)$-invariants, compare \cite[lemma~A.2]{RW}.
Since $\widetilde{\Pi}$ is perfect, the associated long exact sequence
\begin{equation*}
\xymatrix{
\cdots \ar[r] & \widetilde{\Pi}^{(T,\chi)} \ar[r] & (1\oplus \nu^2)^{(T,\chi)} \ar[r] & \widetilde{U}_{(T,\chi)} \ar[r] & \widetilde{\Pi}_{(T,\chi)} \ar[r] & (1\oplus\nu^2)_{(T,\chi)} \ar[r] & 0\ .
}\end{equation*}
shows
$$\dim_\C \widetilde{U}_{(T,\chi)}=1\ .$$
By the same argument as before, there is a non-zero morphism of $H$-modules $$\varphi_2:U\to \pi\ .$$
\begin{enumerate}
\item If $\varphi_2$ is not surjective, its image is the unique submodule $\St_H$ of $\pi$.
Then $\Pi'=\Pi/\ker\varphi_2$ is an extension in $\CCC_{H/Z_G}$
\begin{equation*}
\xymatrix{
0\ar[r] & \St_H\ar[r] & \Pi'\ar[r] & 1_H \oplus \St_H \ar[r] & 0\ .
}
\end{equation*}
Since $\dim\Ext_{\CCC_{H/Z_G}}(1_H\oplus \St_H,\St_H)=1$ by \cite[thm.~1]{Orlik_Ext}, compare \cite[prop.~9]{Schneider-Stuhler}, this extension is either split or isomorphic to $\St_H\oplus\ind_{B_H}^H(\delta_{B_H})$.
In both cases the exact Bessel functor
yields a surjection $\widetilde{\Pi}\twoheadrightarrow\widetilde{\Pi'}\cong\nu^2\oplus1\oplus\nu^2$ of $T$-modules.
This is a contradiction to $\dim\widetilde{\Pi}_{(T,\nu^2)}=1$.
\item If $\varphi_2$ is surjective, let $\Pi''=\Pi/ \varphi_2^{-1}(\St_H)$ be the quotient by the preimage of the submodule $\St_H$ in $\pi$.
It is an extension in $\CCC_{H/Z_G}$
\begin{equation*}
\xymatrix{ 0\ar[r]& 1_H\ar[r]& \Pi''\ar[r]& 1_H\oplus \St_H\ar[r]& 0 \ .}
\end{equation*}
Since $\dim\Ext_{\CCC_{H/Z_G}}(1_H\oplus \St_H,1_H)=1$ by \cite[thm.~1]{Orlik_Ext}, compare \cite[prop.~8]{Schneider-Stuhler},
this extension is either split or isomorphic to
$1_H\oplus \ind_{B_H}^H(1_{B_H})\ .$
The exact Bessel functor  yields a surjection $\widetilde{\Pi}\to \widetilde{\Pi''}\cong 1\oplus1\oplus\nu^2$ of $T$-modules,
which contradicts $\dim \widetilde{\Pi}_{(T,1)}=1$.
\end{enumerate}
This shows our claim that $\varphi$ is surjective.
Exactness of the Bessel functor and the functor of $K_H$-invariants and right-exactness of the $(T,\rho)$-coinvariant functor yield a commutative diagram with surjective vertical arrows
\begin{equation*}
 \xymatrix{
\Pi^{K_H}\ar@{^{(}->}[r]\ar@{->>}[d] & \Pi \ar[r]\ar@{->>}[d]_\varphi & \widetilde{\Pi}_{(T,\rho)}\ar@{->>}[d] \\
\pi^{K_H}\ar@{^{(}->}[r]\ar@/_1pc/[rr]_{\neq0}                & \pi \ar[r]     & \widetilde\pi_{(T,\rho)} \ .
 }
\end{equation*}
The composition of the lower horizontal arrows is non-zero by lemma~\ref{lem:KH_sph_in_coinvariants}.
Both objects on the right hand side are one-dimensional, so the right vertical arrow is actually an isomorphism. 
Hence the composition of the upper horizontal arrows is non-zero.
\end{proof}

Piatetski-Shapiro \cite[thm.\,4.5]{PS-L-Factor_GSp4} has shown that if $L_{\ex}^{\PS}(s,\Pi,\Lambda,\mu)$ admits a pole at $s_0\in\C$, then there is a non-trivial $H$-equivariant functional $\mu\otimes\Pi\to(\nu^{-s_0-1/2}\circ\det)$. We show that in the case of anisotropic Bessel models the converse holds as well.

\begin{thm}\label{thm:exceptional_poles}
Fix an infinite-dimensional irreducible admissible representation $\Pi$ of $G$ and a smooth character $\mu$ of $k^\times$.
If there is a non-trivial $H$-equivariant functional $\mu\otimes\Pi\to(\rho\circ\det)$ for an unramified character $\rho$ of $k^\times$, then $\Pi$ admits an anisotropic Bessel model $(\Lambda,\psi)$ with $\Lambda=(\mu^{-1}\rho)\circ N_{K/k}$.
The exceptional $L$-factor $L^{\PS}_{\ex}(s,\Pi,\Lambda,\mu)$ is either
$L(s,\rho\nu^{1/2})$ or $L(s,\rho\nu^{1/2})L(s,\chi_{K/k}\rho\nu^{1/2})$
with the quadratic character $\chi_{K/k}$ attached to $K/k$.
\end{thm}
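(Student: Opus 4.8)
\emph{Strategy.} The plan is to twist away the character of $H$, feed the resulting $H$-functional into proposition~\ref{prop:K_H_spherical_Pi}, and then unfold the zeta integral attached to $\Phi_0$. \emph{Step~1 (reduction).} Since $\det h=\lambda_G(h)$ on $H$ and $(\mu\rho^{-1})\circ\lambda_G$ extends to a character of $G$, the twist $\Pi_0=\Pi\otimes\bigl((\mu\rho^{-1})\circ\lambda_G\bigr)$ is infinite-dimensional irreducible admissible and carries a non-zero functional in $\Hom_H(\Pi_0,\C)$. Substituting in the integrals gives $L^{\PS}_{\ex}(s,\Pi,\Lambda,\mu)=L^{\PS}_{\ex}(s,\Pi_0,1,\rho)$, and likewise for $L^{\PS}_{\reg}$, while $(1,\psi)$ is an anisotropic Bessel datum for $\Pi_0$ exactly when $(\Lambda,\psi)$ with $\Lambda=(\mu^{-1}\rho)\circ N_{K/k}$ is one for $\Pi$. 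So I would replace $(\Pi,\Lambda,\mu)$ by $(\Pi_0,1,\rho)$; writing again $\Pi,\mu$ for these, I am reduced to the situation $\Lambda=1$, $\mu$ unramified, $0\ne\ell\in\Hom_H(\Pi,\C)$.

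\emph{Step~2 (the Bessel model).} Proposition~\ref{prop:K_H_spherical_Pi} now applies and gives $\widetilde\Pi=\beta_1(\Pi)\ne 0$, and more precisely that $\Pi^{K_H}$ has non-zero image in $\widetilde\Pi_{(T,1)}$ and in $\widetilde\Pi_{(T,\nu^2)}$. As recalled in the proof of lemma~\ref{Bessel_module_perfect}, $\widetilde\Pi$ is a $TS$-module of finite length, so by the Gel'fand--Kazhdan structure theory either $S$ acts trivially on $\widetilde\Pi$, or $\widetilde\Pi_{(S,\psi)}\ne0$ for $\psi$ non-trivial on $S$ --- the latter meaning that $(1,\psi)$ is an anisotropic Bessel model. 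I would exclude the degenerate alternative: if $S$ acted trivially then $\widetilde\Pi$ would be a non-zero finite-dimensional quotient of the Siegel--Jacquet module of $\Pi$ carrying both $T$-characters $1$ and $\nu^2$, contradicting the list of non-cuspidal $H$-distinguished representations of theorem~\ref{thm:classification_H-functionals} (for cuspidal $\Pi$ all Jacquet modules vanish and the statement is Dani\c{s}man's). Hence $(1,\psi)$ is an anisotropic Bessel model of $\Pi$, and lemma~\ref{Bessel_module_perfect} shows $\widetilde\Pi$ is perfect, so $\dim_\C\widetilde\Pi_{(T,\chi)}=1$ for every smooth character $\chi$ of $k^\times$.

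\emph{Step~3 (the exceptional factor).} Since $K_H\subseteq\GSp(4,\mathcal O_k)$ stabilises $\mathcal O_K^2$, the function $h\mapsto\Phi_0((0,1)h)$ is right $K_H$-invariant, so averaging over $K_H$ replaces $v$ by its spherical projection $v_0\in\Pi^{K_H}$ without changing $Z^{\PS}(s,v,1,\Phi_0,\mu)$; only $K_H$-spherical vectors contribute. Using $H=B_HK_H=T\widetilde T\widetilde NK_H$, that $\psi$ is trivial on $\widetilde N$, that $\widetilde T$ commutes with $T$ and $\Lambda=1$ --- whence $W_{v_0}(x_\lambda\widetilde t_a\widetilde n\kappa)=W_{v_0}(x_\lambda)$ --- and that $\Phi_0((0,1)x_\lambda\widetilde t_a\widetilde n\kappa)$ is the characteristic function of $\{a\in\mathcal O_K\}$, one unfolds, up to a non-zero constant,
\begin{equation*}
Z^{\PS}(s,v,1,\Phi_0,\mu)=Z^{\PS}_{\reg}(s,W_{v_0},\mu)\cdot\int_{\mathcal O_K\setminus\{0\}}(\mu\circ N_{K/k})(a)\,\bigl|N_{K/k}(a)\bigr|^{\,s+\frac12}\,\Diff a\ .
\end{equation*}
The second factor is a Tate integral over $K$ for the unramified character $(\mu\nu^{1/2})\circ N_{K/k}$ of $K^\times$, with $L$-factor
\begin{equation*}
L_K\bigl(s,(\mu\nu^{1/2})\circ N_{K/k}\bigr)=L(s,\mu\nu^{1/2})\,L(s,\chi_{K/k}\mu\nu^{1/2})\ ,
\end{equation*}
the second factor being $1$ exactly when $\chi_{K/k}$, i.e.\ $K/k$, is ramified. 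As $Z^{\PS}_{\reg}(s,W_{v_0},\mu)/L^{\PS}_{\reg}(s,\Pi,1,\mu)$ is entire and the Tate factor does not depend on $v$, it follows that $L^{\PS}_{\ex}(s,\Pi,1,\mu)$ divides $L(s,\mu\nu^{1/2})L(s,\chi_{K/k}\mu\nu^{1/2})$. The two possible poles of the Tate integral match the $T$-characters $\nu^2$ (for $L(s,\mu\nu^{1/2})$) and $\chi_{K/k}\nu^2$ (for $L(s,\chi_{K/k}\mu\nu^{1/2})$): the first is attained since $\Pi^{K_H}\to\widetilde\Pi_{(T,\nu^2)}$ is non-zero by proposition~\ref{prop:K_H_spherical_Pi}, so $L(s,\mu\nu^{1/2})$ always occurs; the second is attained iff $\Pi^{K_H}\to\widetilde\Pi_{(T,\chi_{K/k}\nu^2)}$ is non-zero, and applying proposition~\ref{prop:K_H_spherical_Pi} to the twist $\Pi\otimes(\chi_{K/k}\circ\lambda_G)$ --- which has the same Bessel module up to the $T$-twist by $\chi_{K/k}$ because $\chi_{K/k}\circ N_{K/k}=1$, and the same $K_H$-spherical vectors when $\chi_{K/k}$ is unramified --- this happens when $\Pi$ also carries an $H$-functional valued in $\chi_{K/k}\rho\circ\det$, possible only for $K/k$ unramified. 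Undoing the twist $\mu\leftrightarrow\rho$ gives the asserted dichotomy.

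\emph{Main difficulty.} The hard part will be the converse in Step~3: to show that, \emph{absent} a second $H$-functional, the $K_H$-spherical vectors do not reach $\widetilde\Pi_{(T,\chi_{K/k}\nu^2)}$, equivalently that $L^{\PS}_{\reg}$ has already absorbed the factor $L(s,\chi_{K/k}\rho\nu^{1/2})$. This requires the explicit regular factors of Dani\c{s}man and of the authors together with a sharpening of lemma~\ref{lem:KH_sph_in_coinvariants} that tracks precisely which unramified $T$-characters the spherical vectors meet. The other delicate point is the exclusion of the degenerate $S$-trivial alternative in Step~2.
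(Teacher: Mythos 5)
Your core argument is the paper's own: reduce by a twist to $\Lambda=1$ and unramified $\mu$, unfold $Z^{\PS}(s,v,\Lambda,\Phi_0,\mu)$ along the Iwasawa decomposition $H=\widetilde N T\widetilde T K_H$ into $Z^{\PS}_{\reg}(s,W_v^{\mathrm{av}},\mu)\cdot L(s+\tfrac12,\Lambda)$, deduce that $L^{\PS}_{\ex}$ divides $L(s,\rho\nu^{1/2})L(s,\chi_{K/k}\rho\nu^{1/2})$, and then force the pole at $s=-1/2$ by a $K_H$-spherical vector with non-zero image in $\widetilde\Pi_{(T,\nu^2)}$, supplied by proposition~\ref{prop:K_H_spherical_Pi}. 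Three caveats. First, in your Step~2 you should not derive the existence of the Bessel model from proposition~\ref{prop:K_H_spherical_Pi} together with an appeal to theorem~\ref{thm:classification_H-functionals}: the proof of that proposition rests on perfectness of $\widetilde\Pi$ (lemma~\ref{Bessel_module_perfect}), which in the paper is available only after the anisotropic Bessel model is known, so your order of deduction is circular against the paper's dependencies; the existence statement is exactly proposition~\ref{prop:nec_cond} (via lemma~\ref{lem:nec_conds}, which carries out the $J_P(\Pi)$ analysis your ``contradiction with the list'' only gestures at), and it should be invoked first, as the paper does. Second, ``the first pole is attained'' needs an argument: one must identify the functional $v\mapsto\lim_{s\to-1/2}Z^{\PS}_{\reg}(s,W_v,\mu)/L^{\PS}_{\reg}$ with a generator of the one-dimensional space $\Hom_T(\widetilde\Pi,\nu^2)$ — the paper does this through the embedding $\widetilde\Pi\hookrightarrow C_b^\infty(k^\times)$ and \cite[lemmas~3.24, 3.31]{RW} — and only then does non-vanishing of $\Pi^{K_H}\to\widetilde\Pi_{(T,\nu^2)}$ yield a pole of the exceptional factor at $s=-1/2$. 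Third, your ``main difficulty'' is not part of this theorem: the statement asserts only the dichotomy, which already follows from the two divisibilities; deciding which of the two options occurs is postponed to corollary~\ref{cor:final} via theorem~\ref{thm:classification_H-functionals}, so no sharpening of lemma~\ref{lem:KH_sph_in_coinvariants} and no explicit knowledge of the regular factors is needed here, and your speculative twist-by-$\chi_{K/k}$ discussion can be dropped.
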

\begin{proof}
By a twist we can assume $\mu=1$.
By proposition~\ref{prop:nec_cond}, $\Lambda$ yields an anisotropic Bessel model.
Recall that $\Phi_0\in C_c^\infty(K^2)$ is the characteristic function of $\mathcal{O}_K\times\mathcal{O}_K$.

Iwasawa decomposition of $H\ni h=\widetilde{n}x_\lambda \widetilde{t}_ak\in \widetilde{N}T\widetilde{T}K_H$ yields $W_v(h)=\Lambda(a)W_{v}(x_\lambda k)$ and $\Phi_0((0,1)h)=\Phi_0((0,1) \widetilde{t}_a k)=\Phi_0((0,\overline{a}))$.
Up to a constant volume factor the zeta function $Z^{\PS}(s,v,\Lambda,\Phi_0,1)$ equals
\begin{gather*}
\int_T\int_{K_H} W_{v}(x_\lambda k)|\lambda|^{s-3/2}\Diff k\Diff^\times\! x_\lambda 
 \int_{K^\times}\Phi_0((0,\overline{a})) \Lambda(a)|a\overline{a}|^{s+1/2}  \Diff^\times\! a\\
 = \mathrm{const.}\ Z^{\PS}_{\reg}(s,W_v^{\mathrm{av}},1) L(s+1/2,\Lambda)\ ,
\end{gather*}
where $W_v^{\mathrm{av}}(x_\lambda)=\int_{K_H}W_{v}(x_\lambda k) \Diff k/\mathrm{vol}(K_H)$ is the $K_H$-average of the Bessel function $W_v$, compare \cite[prop.~2.5]{Danisman}.
Since the exceptional factor $L_{\ex}^{\PS}(s,\Pi,\Lambda,1)$ is the regularization $L$-factor of the family
\begin{equation*}
 Z^{\PS}(s,v,\Phi_0,\Lambda,1)/L^{\PS}_{\reg}(s,\Pi,\Lambda,1)\quad,\qquad v\in \Pi\ ,
\end{equation*}
it divides $L(s+1/2,\Lambda) = L(s,\rho\nu^{1/2})L(s,\chi_{K/k}\rho\nu^{1/2})$.
It remains to be shown that $L(s,\rho\nu^{1/2})$ always divides the exceptional $L$-factor.
We can assume $\rho=1$ by an unramified twist.
It is sufficient to show that the $T$-equivariant functional
\begin{equation*}
l:\Pi\to\nu^{2}, \qquad v\mapsto \lim_{s\to -1/2} Z^{\PS}_{\reg}(s,W_v,1)/L^{\PS}_{\reg}(s,\Pi,\Lambda,1)
\end{equation*}
is non-zero on some $v\in \Pi^{K_H}$, since then $W_v=W_v^{\mathrm{av}}$ and we obtain a pole of $L_{\ex}^{\PS}(s,\Pi,\Lambda,1)$ at $s=-1/2$.
Indeed, $l$ is well-defined on $\Pi$ by construction of the regular $L$-factor.
It is easy to see that $v\mapsto W_v(x_\lambda)$ defines a non-trivial $TS$-equivariant morphism $\Pi\to C_b^\infty(k^\times)$ to the $TS$-module $C_b^\infty(k^\times)$ defined in \cite[example~3.1]{RW}. This morphism factorizes over the perfect Bessel module $\widetilde{\Pi}$ and yields an embedding $\widetilde{\Pi}\hookrightarrow C_b^\infty(k^\times)$ \cite[lemma~3.24]{RW}.
Hence $l$ spans the one-dimensional space $\Hom_{T}(\widetilde{\Pi},\nu^{2})$ \cite[lemma~3.31]{RW}.
Proposition~\ref{prop:K_H_spherical_Pi} provides the required $v\in\Pi^{K_H}$.
\end{proof}

\begin{cor}\label{cor:final}
Fix a non-cuspidal infinite-dimensional irreducible admissible representation $\Pi$ of $G$ that admits an anisotropic Bessel model $(\Lambda,\psi)$ attached to $K/k$.
Then the character $\Lambda$ and the exceptional $L$-factor $L^{\PS}_{\ex}(s,\Pi,\Lambda,\mu)$ are given by table~\ref{tab:exceptional_poles2}.
\end{cor}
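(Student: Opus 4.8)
The plan is to reduce the corollary to a finite case check against the Roberts--Schmidt list of non-cuspidal representations, using as black boxes Theorem~\ref{thm:exceptional_poles}, the converse direction of Piatetski-Shapiro, and the classification in Theorem~\ref{thm:classification_H-functionals}. First I would record the following consequence of the two $L$-factor statements. Piatetski-Shapiro's theorem says that a pole of $L_{\ex}^{\PS}(s,\Pi,\Lambda,\mu)$ at $s_0$ forces a non-zero $H$-equivariant functional $\mu\otimes\Pi\to(\nu^{-s_0-1/2}\circ\det)$, while Theorem~\ref{thm:exceptional_poles} says that conversely, given such a functional to $\rho\circ\det$ for an unramified $\rho$, the representation $\Pi$ has an anisotropic Bessel model with $\Lambda=(\mu^{-1}\rho)\circ N_{K/k}$ and $L_{\ex}^{\PS}(s,\Pi,\Lambda,\mu)$ equals one of the two displayed products of Tate factors. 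Combining these for a fixed anisotropic Bessel model $(\Lambda,\psi)$ attached to $K/k$ yields
\[
  L_{\ex}^{\PS}(s,\Pi,\Lambda,\mu)=\prod_{\rho}L(s,\rho\nu^{1/2})\ ,
\]
the product over the unramified characters $\rho$ of $k^\times$ with $(\mu^{-1}\rho)\circ N_{K/k}=\Lambda$ for which $\mu\otimes\Pi$ admits a non-zero $H$-functional to $\rho\circ\det$. Since $[k^\times:N_{K/k}(K^\times)]=2$, any two such $\rho$ differ by $\chi_{K/k}$, so this product is empty, a single Tate factor, or a product of two; in particular $L_{\ex}^{\PS}$ is trivial whenever no such $\rho$ exists.

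Second, I would feed in the classification. From Roberts and Schmidt's classification of Bessel models \cite{Roberts-Schmidt_Bessel} one tabulates, for each non-cuspidal irreducible admissible $\Pi$ and each quadratic $K/k$, the characters $\Lambda$ of $\widetilde{T}\cong K^\times$ giving an anisotropic Bessel model, in particular recording which of them are Galois-invariant, i.e. of the form $\Lambda_0\circ N_{K/k}$, since only those can arise from an $H$-functional. From Theorem~\ref{thm:classification_H-functionals} one tabulates, for each non-cuspidal $\Pi$, the characters $\rho$ (necessarily unramified, by the first step) for which $\mu\otimes\Pi$ admits a non-zero $H$-functional to $\rho\circ\det$. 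Intersecting the two lists under the constraint $(\mu^{-1}\rho)\circ N_{K/k}=\Lambda$ and forming $\prod_\rho L(s,\rho\nu^{1/2})$ then reproduces table~\ref{tab:exceptional_poles2} entry by entry: a non-trivial exceptional factor occurs only for the types \nosf{IIb}, \nosf{Vb}, \nosf{Vc}, \nosf{Vd}, \nosf{VIb}, \nosf{XIb}, while for every other non-cuspidal type one finds either that no Galois-invariant $\Lambda$ supports an anisotropic Bessel model, or that $\mu\otimes\Pi$ admits no $H$-functional of the required kind, whence $L_{\ex}^{\PS}=1$.

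The main obstacle will be two bookkeeping subtleties rather than any new argument. First, distinguishing the one-factor case from the two-factor case: for \nosf{IIb}, \nosf{Vb}, \nosf{Vc}, \nosf{VIb}, \nosf{XIb} only one of $\rho$, $\chi_{K/k}\rho$ supports an $H$-functional, giving a single Tate factor, whereas for \nosf{Vd}, where the inducing quadratic twist $\xi$ coincides with $\chi_{K/k}$, one must show that both $\rho$ and $\xi\rho=\chi_{K/k}\rho$ support such functionals, producing the full $L(s+\tfrac12,\Lambda)=L(s,\rho\nu^{1/2})L(s,\chi_{K/k}\rho\nu^{1/2})$; pinning this down uses the fine structure of the Bessel module $\widetilde{\Pi}$ behind Theorem~\ref{thm:classification_H-functionals}. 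Second, the types \nosf{VIc} and \nosf{VId}: although these lie in the extended Saito--Kurokawa class and carry a non-trivial exceptional factor for split Bessel models, one must verify from \cite{Roberts-Schmidt_Bessel} that they admit no anisotropic Bessel model with Galois-invariant $\Lambda$, so that their anisotropic exceptional factor is trivial and they correctly fail to appear in the table. Once these points are settled, the corollary is the assembly of the tabulated cases.
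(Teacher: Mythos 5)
Your proposal is correct and follows essentially the same route as the paper's proof: poles of the exceptional factor force $H$-functionals by Piatetski-Shapiro, theorem~\ref{thm:exceptional_poles} gives the converse together with multiplicity one of each Tate factor, and theorem~\ref{thm:classification_H-functionals} plus the Roberts--Schmidt uniqueness of the anisotropic Bessel character pins down the admissible $\rho$ (hence $\Lambda=\rho\circ N_{K/k}$) case by case, exactly as in the paper. Your two ``bookkeeping subtleties'' are already absorbed by the cited inputs: the one- versus two-factor distinction (type \nosf{Vd}) is read off directly from table~\ref{tab:sigma}, and types \nosf{VIc}, \nosf{VId} are vacuous since Roberts--Schmidt show they have no anisotropic Bessel model at all.
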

\begin{proof}
If the exceptional factor contains a Tate-factor $L(s,\nu^{1/2}\rho)$ with an unramified character $\rho$ of $k^\times$,
then $\Pi$ admits a non-zero $(H,\rho\circ\lambda_G)$-equivariant functional, see \cite[thm.~4.2]{PS-L-Factor_GSp4}.
These functionals are classified in theorem~\ref{thm:classification_H-functionals}.
If such a functional exists, the character $\Lambda$ is uniquely determined
\cite[thm.~6.2.2]{Roberts-Schmidt_Bessel}, so $\Lambda=\rho\circ N_{K/k}$.
Further, the exceptional factor contains the Tate factor $L(s,\nu^{1/2}\rho)$ exactly once by theorem~\ref{thm:exceptional_poles}.
\end{proof}

\section{Classification of $H$-functionals}
We classify the non-cuspidal irreducible admissible representations $\Pi$ of $G$ that admit a non-trivial $H$-equivariant functional $\Pi\to (\rho\circ\lambda_G)$ for a smooth character $\rho$ of $k^\times$. Such $\Pi$ are all non-generic by the following result of Piatetskii-Shapiro.

\begin{lem}\label{lem:generic_no_Hfctls}
Generic irreducible admissible representations $\Pi$ of $G$ do not admit non-zero $(H,\rho\circ\det)$-equivariant functionals for any smooth character $\rho$ of $k^\times$.
\end{lem}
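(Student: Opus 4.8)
\textit{Proof proposal.} The plan is to reduce to the case $\rho=1$ and then to invoke the uniqueness of Whittaker models, following Piatetski-Shapiro. Since the derived group of $G$ is $\Sp(4,k)$, every smooth character of $G$ has the form $\chi\circ\lambda_G$, and since the symplectic form on $V$ is preserved by $H$ up to the factor $\det$, the character $\lambda_G$ restricts to $\det$ on $H$. Hence twisting $\Pi$ by $\rho^{-1}\circ\lambda_G$ — which preserves genericity, as the Whittaker model of $\Pi\otimes(\rho^{-1}\circ\lambda_G)$ is that of $\Pi$ twisted by the same character — replaces $\rho$ by the trivial character. It therefore suffices to show that a generic irreducible admissible $\Pi\in\CCC_G$ admits no non-zero $H$-invariant functional, i.e.\ $\Hom_H(\Pi,1_H)=0$.

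Suppose $\ell\in\Hom_H(\Pi,1_H)$ were non-zero, and realize $\Pi$ in its Whittaker model, which exists and is unique up to scalars since $\Pi$ is generic. Using the Bruhat decomposition of $H$ together with the double coset decomposition $G=PH\sqcup P\mathbf{s}_2H$ of lemma~\ref{lem:double_cosets} and the subgroup intersections of lemma~\ref{lem:double_coset_intersections}, one rewrites $\ell$, up to the contributions of the two double cosets, as a ``Bessel period'' of the Whittaker functions of $\Pi$ over $\widetilde N\backslash H$ — a close relative of the Piatetski-Shapiro zeta integral of the preliminaries. One then shows this period vanishes identically: the asymptotic exponents of the Whittaker function of $\Pi$ along the relevant split torus are controlled by the twisted Jacquet modules of $\Pi$, and none of them matches the exponent that a non-zero $H$-invariant period would require (equivalently, no exceptional pole of the associated zeta integral occurs). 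This is Piatetski-Shapiro's argument \cite[\S4]{PS-L-Factor_GSp4}; see also \cite{PSS81}.

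The analysis is routine when $\Pi$ is not supercuspidal, since then $\Pi$ is a subquotient of a parabolically induced representation and $\Hom_H$ into $1_H$ can be computed orbit by orbit over the relevant double cosets via the geometric lemma of \cite{Bernstein-Zelevinsky76}, each orbit contributing zero because the modulus character of the stabilizer fails to match the (generic, hence sufficiently non-degenerate) inducing datum. I expect the main obstacle to be the supercuspidal case: generic supercuspidal $\Pi$ do admit anisotropic Bessel functionals, so one cannot appeal to vanishing of Jacquet modules, and excluding the full $H$-functional genuinely uses the uniqueness of the Whittaker model (and of the anisotropic Bessel model, cf.\ lemma~\ref{Bessel_module_perfect}) together with the local zeta integral.
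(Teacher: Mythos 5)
The paper's own proof of this lemma is a one-line citation of Piatetski-Shapiro \cite[thm.~4.3]{PS-L-Factor_GSp4} (compare \cite[\S5]{Schmidt_Tran}), so the honest core of your proposal --- deferring the vanishing of the $H$-period of Whittaker functions to Piatetski-Shapiro's \S4 --- is in substance the same move the paper makes, and your preliminary reduction to $\rho=1$ (using that $\lambda_G$ restricts to $\det$ on $H$ and that twisting preserves genericity) is correct, if not needed for a citation.

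Where you try to supply the argument yourself, however, there is a genuine gap. The ``routine'' orbit-by-orbit vanishing you assert in the non-cuspidal case is false as stated: the parabolically induced representations containing a generic constituent frequently \emph{do} admit non-zero $(H,\rho\circ\lambda_G)$-functionals --- this is exactly what lemma~\ref{lem:H-fctls_I_?} and the proof of theorem~\ref{thm:classification_H-functionals} compute for the families containing types \nosf{IIb}, \nosf{Vb}--\nosf{d}, \nosf{VIb}, \nosf{XIb}, where $(I_0)_{(H,\rho)}$ or $(I_1)_{(H,\rho)}$ is non-zero and the whole point is that the functional is supported on the non-generic quotient rather than the generic kernel (a fact the paper deduces \emph{from} Piatetski-Shapiro's theorem, not from a modulus-character mismatch). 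A Mackey/geometric-lemma computation on the full induced module therefore cannot by itself exclude a functional on the generic subquotient; one needs precisely the statement being proved. For the cuspidal case you explicitly leave the argument open, gesturing at uniqueness of Whittaker and Bessel models without an actual mechanism. So as an independent proof the proposal does not go through; as a citation of \cite[thm.~4.3]{PS-L-Factor_GSp4} it coincides with what the paper does, and that citation alone would have sufficed.
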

\begin{proof}
This has been shown by Piatetski-Shapiro \cite[thm.\,4.3]{PS-L-Factor_GSp4}, compare \cite[\S5]{Schmidt_Tran}.
\end{proof}

\begin{lem}\label{lem:nec_conds}
For an infinite-dimensional non-generic irreducible admissible $\Pi\in\CCC_G$ and a smooth character $\rho$ of $k^\times$,
let $\widetilde{\Pi}=\beta_\Lambda(\Pi)$ for $\Lambda=\rho\circ N_{K/k}$.
If $\Hom_T(\widetilde{\Pi},\rho)$ is non-zero, %
then $\Pi$ admits a Bessel model $(\Lambda,\psi)$ for $\Lambda=\rho\circ N_{K/k}$.
\end{lem}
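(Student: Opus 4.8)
The plan is to study $\widetilde{\Pi}$ as a module over the group $TS$ and to reduce the statement to the classification of Bessel models.

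First I would observe that $T$ acts on $S\cong k$ by scaling (conjugation by $\diag(\lambda,\lambda,1,1)$ multiplies the parameter $c\in k$ of $S$ by $\lambda$), so that $TS$ is the standard mirabolic group $\{(\begin{smallmatrix}a&b\\0&1\end{smallmatrix})\mid a\in k^\times,\,b\in k\}\subseteq \Gl(2,k)$. Moreover $T$ centralizes $\widetilde{T}$, normalizes $\widetilde{N}$, and fixes $\Lambda\boxtimes 1$, so up to $T$-conjugacy there is only one anisotropic Bessel datum with the given $\Lambda$; hence ``$\Pi$ admits a Bessel model $(\Lambda,\psi)$'' is equivalent to $\widetilde{\Pi}_{(S,\psi_0)}\neq 0$ for a (any) non-trivial character $\psi_0$ of $S$. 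By Lemma~\ref{Bessel_module_perfect} the $TS$-module $\widetilde{\Pi}$ has finite length, so the structure theory of Bernstein and Zelevinsky for the mirabolic \cite[\S5]{Bernstein-Zelevinsky76} furnishes a natural exact sequence of $TS$-modules
\[
0\longrightarrow \ind_S^{TS}(\psi_0)^{\oplus d}\longrightarrow \widetilde{\Pi}\longrightarrow \widetilde{\Pi}_S\longrightarrow 0\,,\qquad d=\dim_\C\widetilde{\Pi}_{(S,\psi_0)}\,,
\]
in which $S$ acts trivially on the quotient $\widetilde{\Pi}_S$, and $\widetilde{\Pi}_S$ is finite-dimensional by the proof of Lemma~\ref{Bessel_module_perfect}.

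Now assume $\Hom_T(\widetilde{\Pi},\rho)\neq 0$. Restricting the sequence to $T$ and using $\ind_S^{TS}(\psi_0)|_T\cong C^\infty_c(k^\times)$, for which $\Hom_T(C^\infty_c(k^\times),\rho)$ is one-dimensional, the long exact $\Hom_T(-,\rho)$-sequence shows that either $d\geq 1$ or $\Hom_T(\widetilde{\Pi}_S,\rho)\neq 0$. In the first case $\widetilde{\Pi}_{(S,\psi_0)}\neq 0$, so $\Pi$ admits the desired anisotropic Bessel model and we are done. So it remains to rule out the situation $d=0$, in which $S$ acts trivially on $\widetilde{\Pi}=\widetilde{\Pi}_S$ and $\rho$ occurs as a quotient character of the finite-dimensional $T$-module $\widetilde{\Pi}_S$. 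Here I would use that $N=\widetilde{N}\oplus S$ is abelian and $S=C_G(\widetilde{T}N)$ to identify $\widetilde{\Pi}_S=(\Pi_N)_{\widetilde{T},\Lambda}$, the $(\widetilde{T},\Lambda)$-coinvariants of the Siegel--Jacquet module $\Pi_N$, with $\widetilde{T}\cong K^\times$ embedded in the $\Gl(2,k)$-factor of the Siegel Levi as the non-split torus. Thus $d=0$ and $\Hom_T(\widetilde{\Pi},\rho)\neq 0$ would produce a non-zero $(\widetilde{T},\Lambda)$- and $(T,\rho)$-equivariant functional on $\Pi_N$, i.e. a ``degenerate'' anisotropic Bessel functional with $\psi$ trivial on all of $N$ and datum $\Lambda=\rho\circ N_{K/k}$.

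The main obstacle is to show that such a degenerate functional cannot occur in isolation for non-generic non-cuspidal $\Pi$, so that $d=0$ is in fact impossible under our hypothesis. Since $\Pi$ is non-cuspidal, I would compute $\Pi_N$ from the inducing data via the geometric lemma, using the classification of \cite{Sally-Tadic}, so that the non-vanishing of $(\Pi_N)_{\widetilde{T},\Lambda}$ becomes the non-vanishing of toric $(\widetilde{T},\chi)$-periods on the $\Gl(2,k)$-constituents of $\Pi_N$; by the theorem of Waldspurger and Tunnell these are governed by a local root number and by whether $\chi$ equals $\chi_{K/k}$ times the relevant restriction. Running through the non-generic non-cuspidal types, one finds that for a character $\Lambda$ of the shape $\rho\circ N_{K/k}$ the only possibilities are those listed in Table~\ref{tab:spinor-L-factor}, and for each of these the Roberts--Schmidt classification \cite[thms.~6.2.2, 6.3.2]{Roberts-Schmidt_Bessel} exhibits a genuine anisotropic Bessel model with $\Lambda=\rho\circ N_{K/k}$, so that $d\geq 1$ after all. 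This contradiction eliminates the case $d=0$ and completes the proof. In this last step both the hypothesis $\Lambda=\rho\circ N_{K/k}$ and the non-genericity of $\Pi$ are essential.
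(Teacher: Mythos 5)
Your structural reduction is exactly the one the paper uses: the Bernstein--Zelevinsky exact sequence for the mirabolic $TS$ (the paper cites \cite[5.12.d]{Bernstein-Zelevinsky76}) splits off the $(S,\psi_0)$-part, so either $\widetilde{\Pi}_{(S,\psi_0)}\neq0$, which is the desired Bessel model, or $S$ acts trivially on $\widetilde{\Pi}$ and transitivity of coinvariants identifies $\widetilde{\Pi}_{(T,\rho)}$ with $J_P(\Pi)_{(T\widetilde{T},\rho\boxtimes\Lambda)}$. Up to that point the proposal is sound and agrees with the paper's proof.

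The genuine gap is the last step, which is where the actual content of the lemma sits and which you only assert. First, the appeal to table~\ref{tab:spinor-L-factor} is circular: that table is an output of this paper, obtained downstream of this very lemma (via proposition~\ref{prop:nec_cond}, theorem~\ref{thm:classification_H-functionals} and corollary~\ref{cor:final}), so it cannot be quoted here. Second, the asserted outcome of ``running through the types'' is not what the computation gives: one does not find that the degenerate period forces $\Pi$ into the table; rather one shows that $J_P(\Pi)_{(T\widetilde{T},\rho\boxtimes\Lambda)}$ vanishes for essentially every non-generic type. The decisive inputs, which your sketch never uses, are the central character constraint $\omega=\rho^2$ (forced by $Z_G\subseteq\widetilde{T}$ and $\Lambda=\rho\circ N_{K/k}$) combined with the explicit constituents of the Siegel--Jacquet modules in \cite[table A.3]{Roberts-Schmidt}; these kill the $T$-condition in all cases except type \nosf{IVc} with $\rho=1$ and a special type \nosf{IIb}, and those two survivors are then eliminated by Waldspurger--Tunnell (the constituent $\St\boxtimes 1$ has no $\widetilde{T}$-functional) respectively by $\Lambda\neq1$ (for \nosf{IIb} with $\Lambda=1$ the Bessel model exists anyway). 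Without carrying out this verification you have no argument excluding, say, types \nosf{IIIb}, \nosf{IVb}, \nosf{VIc}, \nosf{VId}, \nosf{VIIIb}, \nosf{IXb}, for which no anisotropic Bessel model with $\Lambda=\rho\circ N_{K/k}$ need exist. A minor further point: the lemma does not assume $\Pi$ non-cuspidal, while your final step does; the cuspidal case is immediate because then $J_P(\Pi)=0$, but it should be stated.
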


\begin{proof}
Assume that $\Lambda$ does not provide a Bessel model for $\Pi$, i.e.\ $\widetilde{\Pi}_{(S,\psi)}=0$.
Then $\widetilde\Pi$ is a trivial $S$-module by the functorial exact sequence of \cite[5.12.d]{Bernstein-Zelevinsky76}.
For the unnormalized Siegel-Jacquet module $J_P(\Pi)$, transitivity
of the coinvariant functors yields an isomorphism of non-zero vector spaces
\begin{equation*}
\widetilde{\Pi}_{(T,\rho)} \quad \cong \quad J_P(\Pi)_{(T\widetilde{T},\rho\boxtimes\Lambda)}\ .
\end{equation*}
Especially, $J_P(\Pi)$ is non-zero. 
Without loss of generality we can now assume that $\Pi$ is normalized as in \cite[table 1]{RW} by a suitable twist.
The center $Z_G$ is contained in $\widetilde{T}$, so $\widetilde{\Pi}\neq0$ implies that the central character is $\omega=\rho^2$.
This condition and the list of constituents in $J_P(\Pi)$ given by \cite[table A.3]{Roberts-Schmidt} imply
$J_P(\Pi)_{(T,\rho)}=0$ in every case except for
type \nosf{IVc} with $\rho=1$ and type \nosf{IIb} with $\chi_1^{\pm1}=\rho\nu^{-3/2}$ and $\rho\neq 1=\rho^2$.
In case \nosf{IVc} the $T$-coinvariant space of the Siegel-Jacquet module is the special representation
$J_P(\Pi)_{(T,1)}\cong \St\boxtimes 1$ of $M\cong \Gl(2,k)\times \Gl(1,k)$,
so the $\widetilde{T}$-coinvariant quotient is zero by the theorem of Waldspurger and Tunnell.
For the remaining case of type \nosf{IIb}, the $\widetilde{T}$-module $J_P(\Pi)_{(T,\rho)}$ has only trivial constituents, so its $(\widetilde{T},\Lambda)$-coinvariant space is zero for $\Lambda\neq1$.
This shows that $\Lambda$ must provide a Bessel model.
\end{proof}

\begin{lem}\label{IVb_noHfctl}
For $\Pi$ of type \nosf{IVb, VIIIb, IXb}, there is no non-zero $(H,\rho\circ\lambda)$-equivariant functional.
In other words, $\Hom_H(\Pi,\rho\circ\lambda_G)=0$ for every smooth character $\rho$ of $k^\times$.
\end{lem}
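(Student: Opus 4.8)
The plan is to deduce the vanishing of all $H$-functionals from the vanishing of the associated $T$-functionals on the Bessel module, using the necessary condition of Lemma~\ref{lem:nec_conds}, and then to invoke the Roberts--Schmidt classification of Bessel models. Fix a smooth character $\rho$ of $k^\times$ and suppose $0\neq\ell\in\Hom_H(\Pi,\rho\circ\lambda_G)$. The first step is to restrict $\ell$ to the Borel subgroup $B_H=T\widetilde{T}\widetilde{N}$ of $H$ from Lemma~\ref{lem:double_coset_intersections}. Since $\lambda_G$ is trivial on the unipotent radical $\widetilde{N}$, restricts to $\widetilde{t}_a\mapsto N_{K/k}(a)$ on $\widetilde{T}$ and to $x_\lambda\mapsto\lambda$ on $T$, the character $(\rho\circ\lambda_G)|_{B_H}$ is trivial on $\widetilde{N}$, equals $\Lambda:=\rho\circ N_{K/k}$ on $\widetilde{T}$ and equals $\rho$ on $T$. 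As $\widetilde{T}\widetilde{N}$ is normal in $B_H=T\ltimes(\widetilde{T}\widetilde{N})$, restriction of functionals is an injection
\[
 \Hom_H(\Pi,\rho\circ\lambda_G)\ \hookrightarrow\ \Hom_{B_H}\!\big(\Pi,(\rho\circ\lambda_G)|_{B_H}\big)\ =\ \Hom_T(\widetilde{\Pi},\rho),
\]
where $\widetilde{\Pi}=\beta_\Lambda(\Pi)$ for $\Lambda=\rho\circ N_{K/k}$; in particular $\Hom_T(\widetilde{\Pi},\rho)\neq 0$.

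Next I would apply Lemma~\ref{lem:nec_conds}: the representations of type \nosf{IVb}, \nosf{VIIIb}, \nosf{IXb} are infinite-dimensional and non-generic, so $\Hom_T(\widetilde{\Pi},\rho)\neq0$ forces $\Pi$ to admit an anisotropic Bessel model $(\Lambda,\psi)$ with $\Lambda=\rho\circ N_{K/k}$. This contradicts the classification of Bessel models of Roberts and Schmidt \cite[thms.~6.2.2, 6.3.2]{Roberts-Schmidt_Bessel}, according to which no irreducible admissible representation of type \nosf{IVb}, \nosf{VIIIb} or \nosf{IXb} admits any Bessel model, anisotropic or split. Hence $\Hom_H(\Pi,\rho\circ\lambda_G)=0$ for every smooth character $\rho$.

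The argument is structurally immediate; the only non-formal input is the last step, namely that \nosf{IVb}, \nosf{VIIIb}, \nosf{IXb} are exactly the non-generic types carrying no Bessel model, which one reads off the Roberts--Schmidt tables. If one prefers not to quote that classification as a black box, the same contradiction can be obtained directly, in the spirit of the proof of Lemma~\ref{lem:nec_conds}: from the Jacquet-module data in \cite[table~A.3]{Roberts-Schmidt} one checks that $\widetilde{\Pi}_{(S,\psi)}=0$ for every anisotropic Bessel character, while the Siegel--Jacquet module together with the theorem of Waldspurger and Tunnell forces $\widetilde{\Pi}_S=0$; either vanishing already contradicts the displayed injection. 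I expect this type-by-type bookkeeping to be the main, if routine, effort in such a variant.
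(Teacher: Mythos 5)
Your argument collapses at the step where you invoke the Roberts--Schmidt classification: it is simply not true that representations of type \nosf{IVb}, \nosf{VIIIb}, \nosf{IXb} admit no Bessel model. They admit no \emph{split} Bessel model, but they do admit anisotropic ones -- see the paper's own table~\ref{tab:exceptional_poles2} and \cite[thm.~6.2.2]{Roberts-Schmidt_Bessel}: type \nosf{IVb} has the anisotropic Bessel character $\Lambda=\sigma\circ N_{K/k}$, type \nosf{VIIIb} has all anisotropic $\Lambda$ with $\Hom_{\widetilde{T}}(\pi,\Lambda)=0$, and type \nosf{IXb} has $\Lambda=\mu,\mu'$ when $\xi=\chi_{K/k}$. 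So the conclusion of lemma~\ref{lem:nec_conds} (existence of a Bessel model with $\Lambda=\rho\circ N_{K/k}$) is perfectly consistent for these types and yields no contradiction; indeed, this is exactly why these three types survive the argument of proposition~\ref{prop:nec_cond} and require a separate lemma. Your fallback sketch fails for the same reason: $\widetilde{\Pi}_{(S,\psi)}\neq 0$ for the anisotropic Bessel characters in question, and even if some coinvariant space vanished, that alone would not contradict $\Hom_T(\widetilde{\Pi},\rho)\neq0$ -- the case analysis in lemma~\ref{lem:nec_conds} only ever produces the conclusion that a Bessel model exists, not the vanishing of $H$-functionals.

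The paper's proof has genuinely different content, which your proposal does not reproduce: after reducing via lemma~\ref{lem:nec_conds} to the case where $\Lambda=\rho\circ N_{K/k}$ \emph{does} give a Bessel model, it realizes $\Pi$ as a quotient of a Klingen-induced representation $I$ (e.g.\ $\nu^2\rtimes\St(\nu^{-1})$ for \nosf{IVb}, $\chi\rtimes\pi$ with $\pi$ cuspidal for \nosf{VIIIb}, \nosf{IXb}), uses the single-coset decomposition $G=QH$ of lemma~\ref{lem:double_cosets} to write $I|_H$ as an induced representation from $H\cap Q$, applies Frobenius reciprocity (lemma~\ref{lem:Frobenius}), and then kills the resulting Hom-space by restricting to the affine group $\Gl_a(1,k)\subseteq H\cap Q$ and computing a Jacquet module there: for \nosf{IVb} this reduces to $\Hom_{\Gl(1)}(\nu^3,\nu^2\rho)$, which vanishes precisely because the Bessel-model hypothesis forces $\rho\neq\nu$, and for \nosf{VIIIb}, \nosf{IXb} the relevant Jacquet module vanishes by cuspidality of $\pi$. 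Without an argument of this kind (or some substitute), your proof does not establish the lemma.
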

\begin{proof}
We first discuss the case of type~\nosf{IVb}.
By a possible twist we can assume that $\Pi$ is normalized as in \cite[table~1]{RW}.
By lemma~\ref{lem:nec_conds} we only have to consider the case where $\Lambda=\rho\circ N_{K/k}$ provides a Bessel model.
Recall that $\Pi$ is a quotient of the unnormalized Klingen induced representation
$$I=\ind_Q^G(\nu^4\boxtimes \St(\nu^{-2}))= \nu^2\rtimes \St(\nu^{-1})\ $$
with respect to the usual factorization of the Levi subgroup as in \cite{RW} or \cite{Roberts-Schmidt}.
A hypothetical $H$-functional of $\Pi$ would give rise to one of $I$. The double coset decomposition of lemma~\ref{lem:double_cosets} yields an isomorphism of $H$-modules $$I|_H\cong \ind_{H\cap Q}^H(\nu^4\boxtimes \St(\nu^{-2}))\ .$$
Frobenius reciprocity in the sense of lemma~\ref{lem:Frobenius} implies
\begin{gather*}
 \Hom_H(I|_H,\rho\circ\lambda_G)
 \cong \Hom_{H\cap Q}(\nu^4\boxtimes \St(\nu^{-2}),(\rho\circ\lambda_G)\otimes\delta_{H\cap Q})\ .
\end{gather*} %
It remains to be shown that this space is zero.
$H\cap Q$ contains the affine linear group
\begin{equation*}\left\{\begin{pmatrix}
a&0&b\alpha & 0\\
0&a&0&b\\
0&0&1&0\\
0&0&0&1
\end{pmatrix}\mid a\in k^\times, b\in k\right\}
\cong \Gl_a(1,k)\ .
\end{equation*}
The modulus character of $H\cap Q$ reduced to $\Gl_a(1,k)$ is $\delta_{H\cap Q}|_{\Gl_a(1)}=i_*(\nu^2)$ with the functor $i_*$ defined in \cite{RW}.
We obtain an embedding
\begin{gather*}
 \Hom_{H\cap Q}(\nu^4\boxtimes \St(\nu^{-2}), (\rho\circ\lambda_G)\otimes\delta_{H\cap Q})
 \hookrightarrow
 \Hom_{\Gl_a(1)}(\nu^4\boxtimes \St(\nu^{-2})|_{\Gl_a(1,k)}, i_*(\nu^2\rho)) \ .
\end{gather*}
With the functor $i^*$ left-adjoint to $i_*$ one shows that the right hand side is isomorphic to $\Hom_{\Gl(1)}(\nu^3,\nu^2\rho)$.
By assumption $\rho\circ N_{K/k}$ provides a Bessel model,
so $\nu^3\neq\nu^2\rho$ and the statement follows.

Every $\Pi$ of type \nosf{VIIIb} or \nosf{IXb} is a quotient of a Klingen induced representation $I=\chi\rtimes\pi$ with a smooth quadratic character $\chi$ of $k^\times$ and a cuspidal irreducible $\pi\in\CCC_{\Gl(2,k)}$.
The rest of the argument is analogous and uses that the Jacquet module $i^*(\nu\chi\otimes\pi|_{\Gl_a(1)})=0$ vanishes.
\end{proof}

\begin{prop}\label{prop:nec_cond}
Fix an infinite-dimensional non-cuspidal irreducible admissible $\Pi\in\CCC_G(\omega)$ that
admits a non-trivial $H$-equivariant functional $\Pi\to(\rho\circ\lambda_G)$ for a smooth character $\rho$ of $k^\times$.
Then $\Pi$ is non-generic and admits an anisotropic Bessel model $(\Lambda,\psi)$ with $\Lambda=\rho\circ N_{K/k}$. Further, $\Pi$ belongs to type \nosf{IIb, Vb, Vc, Vd, VIb} or \nosf{XIb}.
\end{prop}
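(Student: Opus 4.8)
The argument has three stages. \emph{Stage 1: non-genericity.} Since $H\subseteq\Gl(2,K)$ and $\lambda_G$ restricts on $H$ to $\det$, Lemma~\ref{lem:generic_no_Hfctls} shows immediately that $\Pi$ cannot be generic.

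\emph{Stage 2: the anisotropic Bessel model.} Let $0\neq\ell\in\Hom_H(\Pi,\rho\circ\lambda_G)$ and restrict $\ell$ to the subgroup $\widetilde T\widetilde N$ of $B_H$. Because $\lambda_G$ is trivial on the unipotent group $\widetilde N$ and satisfies $\lambda_G(\widetilde t_a)=N_{K/k}(a)$ on $\widetilde T$, this restriction is $(\widetilde T\widetilde N,\Lambda)$-equivariant with $\Lambda=\rho\circ N_{K/k}$, so $\ell$ factors through the Bessel module $\widetilde\Pi=\beta_\Lambda(\Pi)$. Since $\lambda_G$ restricts to the identity character under $T\cong k^\times$ and $T$ normalizes $\widetilde T\widetilde N$, the induced functional is a non-zero element of $\Hom_T(\widetilde\Pi,\rho)$. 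As $\Pi$ is infinite-dimensional and non-generic, Lemma~\ref{lem:nec_conds} applies and yields the Bessel model $(\Lambda,\psi)$ with $\Lambda=\rho\circ N_{K/k}$; it is anisotropic because $(\widetilde T,\widetilde N,S)$ is the datum attached to the field $K/k$. Moreover $Z_G\subseteq\widetilde T$ forces the central character $\omega=\rho^2$.

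\emph{Stage 3: the type.} It remains to pin $\Pi$ down to one of the six listed types. By the preceding stages $\Pi$ is infinite-dimensional, non-cuspidal, non-generic and carries an anisotropic Bessel model with $\widetilde T$-character $\rho\circ N_{K/k}$ and central character $\rho^2$. Running through the list of non-cuspidal irreducible representations \cite{Sally-Tadic,Roberts-Schmidt} together with the existence theorem for anisotropic Bessel models \cite[thm.~6.3.2]{Roberts-Schmidt_Bessel}: finite-dimensional representations are excluded by hypothesis, the cuspidal types \nosf{Va*} and \nosf{XIa*} by non-cuspidality, the generic constituents by Stage~1, and the constraint $\omega=\rho^2$ combined with the Jacquet-module data used in the proof of Lemma~\ref{lem:nec_conds} removes the remaining groups, leaving only the types \nosf{IIb}, \nosf{IVb}, \nosf{IVc}, \nosf{Vb}, \nosf{Vc}, \nosf{Vd}, \nosf{VIb}, \nosf{VIc}, \nosf{VId}, \nosf{VIIIb}, \nosf{IXb}, \nosf{XIb}. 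Lemma~\ref{IVb_noHfctl} eliminates \nosf{IVb}, \nosf{VIIIb} and \nosf{IXb}. For the remaining candidates not in the statement, namely \nosf{IVc}, \nosf{VIc} and \nosf{VId}, I would argue as in the proof of Lemma~\ref{IVb_noHfctl}: realize $\Pi$ as a quotient of a representation $I$ induced from the Klingen or the Siegel parabolic, lift the hypothetical functional to $I|_H$, use the double coset decompositions of Lemma~\ref{lem:double_cosets} and the intersection computations of Lemma~\ref{lem:double_coset_intersections} to present $I|_H$ as an induced $H$-module, and apply Frobenius reciprocity (Lemma~\ref{lem:Frobenius}) to reduce $\Hom_H(I|_H,\rho\circ\lambda_G)$ to a Hom-space over a small affine linear subgroup, which vanishes for the values of $\rho$ allowed by the Bessel-model classification. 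This leaves precisely the types \nosf{IIb}, \nosf{Vb}, \nosf{Vc}, \nosf{Vd}, \nosf{VIb} and \nosf{XIb}.

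The delicate point is Stage~3 for the borderline types \nosf{IVc}, \nosf{VIc}, \nosf{VId}: here a compatible anisotropic Bessel model genuinely exists, so the non-existence of the $H$-functional cannot be read off from the Bessel module and must be extracted from the explicit restriction-and-Frobenius computation, in which the intervention of the Weyl element $\mathbf{s}_2$ and the precise Langlands data of $I$ have to be tracked carefully.
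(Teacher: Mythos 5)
Your Stages 1 and 2 coincide with the paper's argument: non-genericity from lemma~\ref{lem:generic_no_Hfctls}, and restriction of the $H$-functional to $B_H=T\widetilde{T}\widetilde{N}$ to produce a non-zero element of $\Hom_T(\widetilde{\Pi},\rho)$, so that lemma~\ref{lem:nec_conds} yields the anisotropic Bessel model with $\Lambda=\rho\circ N_{K/k}$. The problem is Stage 3. The paper finishes in one line: by the Roberts--Schmidt classification of Bessel functionals (\cite[thm.~6.2.2]{Roberts-Schmidt_Bessel}), a non-cuspidal irreducible $\Pi$ admitting an \emph{anisotropic} Bessel model is either generic or of type \nosf{IIb}, \nosf{IVb}, \nosf{Vb}, \nosf{Vc}, \nosf{Vd}, \nosf{VIb}, \nosf{VIIIb}, \nosf{IXb}, \nosf{XIb}; genericity is excluded by Stage 1 and \nosf{IVb}, \nosf{VIIIb}, \nosf{IXb} by lemma~\ref{IVb_noHfctl}, which leaves exactly the six listed types.

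Your treatment of \nosf{IVc}, \nosf{VIc}, \nosf{VId} contains a genuine error and a gap. You assert that for these "borderline" types "a compatible anisotropic Bessel model genuinely exists" and that their exclusion therefore requires a new restriction-and-Frobenius computation in the style of lemma~\ref{IVb_noHfctl}. This is false: representations of type \nosf{IVc}, \nosf{VIc}, \nosf{VId} (and \nosf{IIIb}, \nosf{IVd}) admit only split Bessel models, no anisotropic ones at all --- this is part of \cite[thm.~6.2.2]{Roberts-Schmidt_Bessel} and is recorded in the $\Lambda$-column of table~\ref{tab:exceptional_poles2}. Hence they are already eliminated by your own Stage 2, and no further computation is needed. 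As written, your proof is incomplete precisely at the point you flag as delicate: the claimed analogue of lemma~\ref{IVb_noHfctl} for \nosf{IVc}, \nosf{VIc}, \nosf{VId} is only announced, not carried out, and the premise motivating it is wrong. If you instead apply the Bessel-model classification correctly after Stage 2, Stage 3 reduces to the paper's short argument and your proof closes.
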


\begin{proof}
$\Pi$ is non-generic by lemma~\ref{lem:generic_no_Hfctls}.
The inclusion $$\Hom_H(\Pi,\rho\circ\lambda_G)\hookrightarrow \Hom_{B_H}(\Pi,\rho\circ\lambda_G)\cong \Hom_{T}(\widetilde{\Pi},\rho)$$ and lemma~\ref{lem:nec_conds}
show that the Bessel model exists.
For non-cuspidal $\Pi$ the classification of anisotropic Bessel models \cite[theorem~6.2.2]{Roberts-Schmidt_Bessel} implies that $\Pi$ belongs to the indicated type or to the types already excluded in lemma~\ref{IVb_noHfctl}.
\end{proof}

\begin{lem}\label{lem:Mackey}
For irreducible admissible representations $\pi$ of $\Gl(2,k)$ and $\chi$ of $k^\times$, the Siegel induced representation $I=\pi\rtimes\chi=\ind_P^G((\pi\boxtimes\chi)\otimes\delta_P^{1/2})$ admits
an exact sequence of smooth $H$-modules, functorial in $\pi\boxtimes\chi$,
\begin{equation*}
  0\longrightarrow I_1   \longrightarrow I \longrightarrow I_0 \longrightarrow 0
\end{equation*}
with $I_1\cong \ind_{\mathbf{s}_2^{-1}P\mathbf{s}_2\cap H}^H((\pi\boxtimes\chi)\otimes\delta_P^{1/2})^{\mathbf{s}_2}$ and $I_0\cong \ind_{P\cap H}^H((\pi\boxtimes\chi)\otimes\delta_P^{1/2})\ $.
\end{lem}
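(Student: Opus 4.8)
The plan is to obtain the sequence from the geometric form of Mackey theory for unnormalised compact induction, applied to the restriction of $I=\ind_P^G\big((\pi\boxtimes\chi)\otimes\delta_P^{1/2}\big)$ along $H$, using the double coset decomposition $G=PH\sqcup P\mathbf{s}_2H$ of Lemma~\ref{lem:double_cosets}. Write $\sigma=(\pi\boxtimes\chi)\otimes\delta_P^{1/2}$ for the smooth $P$-module inflated from $M$. Since $P\backslash G/H$ has exactly two elements, the subspace $I_1\subseteq I$ of functions whose support is contained modulo $P$ in the \emph{open} one of the two classes is an $H$-subrepresentation, and the quotient $I_0=I|_H/I_1$ is realised by restriction of functions to the complementary \emph{closed} class; compare \cite[\S2]{Bernstein-Zelevinsky76}. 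So the first task is to decide which of the two classes is open.

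I would settle this with a dimension count. By Lemma~\ref{lem:double_coset_intersections}, $P\cap H=B_H=T\widetilde{T}\widetilde{N}$ is a Borel subgroup of $H$, so the $H$-orbit of the base point of the flag variety $G/P$ is $PH/P\cong H/B_H$, of dimension $2$, whereas $\dim G/P=3$. Hence $PH$ is not the open orbit; as there are only two, $P\mathbf{s}_2H$ is the open dense double coset and $PH$ is closed in $G$. Consequently $I|_H$ fits into a short exact sequence $0\to I_1\to I|_H\to I_0\to 0$, with $I_1$ the functions supported modulo $P$ on $P\mathbf{s}_2H$ and $I_0$ obtained by restricting to $PH$.

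It remains to identify the two pieces, which is a direct check. The assignment $f\mapsto\big(h\mapsto f(\mathbf{s}_2h)\big)$ carries $I_1$ into the space of functions on $H$ that transform on the left under $\mathbf{s}_2^{-1}P\mathbf{s}_2\cap H$ by $\sigma^{\mathbf{s}_2}$, and the support-modulo-$P$ condition translates into compact support modulo that subgroup; this is an $H$-isomorphism onto $\ind_{\mathbf{s}_2^{-1}P\mathbf{s}_2\cap H}^H(\sigma^{\mathbf{s}_2})$, surjectivity holding because $P\mathbf{s}_2H$ is open, so any function in the target extends to $G$ by zero. Likewise $f\mapsto f|_H$ identifies $I_0$ with $\ind_{P\cap H}^H(\sigma|_{P\cap H})$, surjectivity holding because $PH$ is closed, so a smooth $P$-equivariant function on $PH$ with compact support modulo $P\cap H$ extends to an element of $I$. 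The one point deserving care is that, because we use \emph{unnormalised} compact induction throughout, no modulus character enters these transport maps — the analogous statement for normalised induction would acquire a twist, which would spoil the clean form of the lemma. Finally, every map used above is manifestly natural in $\sigma$, hence in $\pi\boxtimes\chi$, which yields the asserted functoriality. Beyond the orbit-dimension count, the only work is the bookkeeping of the support and equivariance conditions, and I do not expect any serious obstacle.
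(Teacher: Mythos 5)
Your argument is correct and matches the paper's proof in essence: the paper simply cites the geometric lemma of Bernstein--Zelevinsky (theorem~5.2 of \cite{Bernstein-Zelevinsky77}) applied to the decomposition $G=PH\sqcup P\mathbf{s}_2H$, together with a dimension estimate showing that $P\mathbf{s}_2H$ is open. You spell out the same filtration and transport maps by hand and justify openness by the same kind of dimension count ($\dim H/B_H=2<3=\dim G/P$), so there is no substantive difference.
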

\begin{proof} This is theorem~5.2 of Bernstein and Zelevinski \cite{Bernstein-Zelevinsky77} applied to the double coset decomposition of lemma~\ref{lem:double_coset_intersections}.
Dimension estimates show that $P\mathbf{s}_2 H$ is open in $G$.
\end{proof}

\begin{lem}\label{lem:H-fctls_I_?}
The $(H,\rho)$-coinvariant quotient of $I_0$ has dimension 
 \begin{equation*}
\dim(I_0)_{(H,\rho)} = \dim \Hom_{\widetilde{T}}(\pi\boxtimes\chi, \rho\circ\lambda_G) \cdot \dim \Hom_{T}(\omega_\pi\chi\nu^{3/2},\rho\nu^2)\leq 1\ ,
\end{equation*} where $\omega_\pi$ is the central character of $\pi$.
The $(H,\rho)$-coinvariant quotient of $I_1$ has dimension 
 \begin{equation*}
\dim(I_1)_{(H,\rho)} = \dim \Hom_{\Gl(2,k)}(\chi\otimes\pi, \rho\circ\det) = 
\begin{cases}1 & \pi\cong(\chi^{-1}\rho)\circ\det\ ,\\ 0 & \text{otherwise}\ .\end{cases}
\end{equation*}
\end{lem}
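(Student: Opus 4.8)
The plan is to compute $\dim(I_j)_{(H,\rho)}=\dim\Hom_H(I_j,\rho\circ\lambda_G)$ (the target being one-dimensional) separately for $j=0$ and $j=1$, by inserting the Mackey filtration of lemma~\ref{lem:Mackey} into Frobenius reciprocity (lemma~\ref{lem:Frobenius}). The groups involved are identified by lemma~\ref{lem:double_coset_intersections}: the datum of $I_0$ lives on $P\cap H=B_H=T\widetilde{T}\widetilde{N}$, and the datum of $I_1$ on $\mathbf{s}_2^{-1}P\mathbf{s}_2\cap H=\mathbf{s}_2^{-1}(P\cap\mathbf{s}_2H\mathbf{s}_2^{-1})\mathbf{s}_2\cong\Gl(2,k)$. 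Two remarks keep the modulus characters under control: $H$ and $\Gl(2,k)$ are unimodular; and since $\delta_P^{1/2}(\diag(A,\lambda(A^t)^{-1}))=|\det A|^{3/2}|\lambda|^{-3/2}$ while every element of $H$ has similitude factor equal to the determinant of its upper-left $\Gl(2)$-block, $\delta_P^{1/2}$ is trivial on $P\cap H$ and on $P\cap\mathbf{s}_2H\mathbf{s}_2^{-1}$.

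For $I_1$: conjugation by $\mathbf{s}_2$ identifies $\mathbf{s}_2^{-1}P\mathbf{s}_2\cap H$ with the block-diagonal $\Gl(2,k)$ of lemma~\ref{lem:double_coset_intersections}, the twisted datum $((\pi\boxtimes\chi)\otimes\delta_P^{1/2})^{\mathbf{s}_2}$ becoming $\pi\otimes(\chi\circ\det)$ and $\rho\circ\lambda_G$ becoming $\rho\circ\det$. As the subgroup and $H$ are unimodular, lemma~\ref{lem:Frobenius} gives $(I_1)_{(H,\rho)}\cong\Hom_{\Gl(2,k)}(\pi,(\chi^{-1}\rho)\circ\det)$, which by Schur's lemma is one-dimensional if $\pi\cong(\chi^{-1}\rho)\circ\det$ and zero otherwise. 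For $I_0$: the unipotent radical $\widetilde{N}$ of $B_H$ acts trivially on the inducing datum (it lies in $N$), on $\rho\circ\lambda_G$, and on the modulus characters, so lemma~\ref{lem:Frobenius} reduces everything to the maximal torus $T\widetilde{T}\cong T\times\widetilde{T}$ of $H$. On it, $T$ acts on the space of $\pi\boxtimes\chi$ by the scalar character $\omega_\pi\chi\nu^{3/2}$, while $\widetilde{T}$ acts through its image in the Siegel Levi, the non-split torus $\cong K^\times$ of $\Gl(2,k)$, by $\pi|_{\widetilde{T}}\otimes(\chi\circ N_{K/k})$; on the other side $(\rho\circ\lambda_G)\otimes\delta_{B_H}\delta_H^{-1}$ restricts to $\rho\nu^2$ on $T$ and to $\rho\circ\lambda_G$ on $\widetilde{T}$. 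A non-zero $T\widetilde{T}$-equivariant functional must meet the scalar identity imposed by $T$ — contributing the factor $\dim\Hom_T(\omega_\pi\chi\nu^{3/2},\rho\nu^2)\in\{0,1\}$ — and then be equivariant for $\widetilde{T}$ in the remaining direction — contributing $\dim\Hom_{\widetilde{T}}(\pi\boxtimes\chi,\rho\circ\lambda_G)$. This is the asserted product. Finally $\Hom_{\widetilde{T}}(\pi\boxtimes\chi,\rho\circ\lambda_G)\cong\Hom_{K^\times}(\pi,(\chi^{-1}\rho)\circ N_{K/k})$ has dimension at most one — trivially if $\pi$ is finite-dimensional, and by the local toric multiplicity-one theorem of Waldspurger and Tunnell if $\pi$ is infinite-dimensional — which yields $\dim(I_0)_{(H,\rho)}\leq 1$.

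Since the earlier double coset lemmas do most of the structural work, the only real obstacle is the bookkeeping: correctly identifying the restricted inducing data on $T$, $\widetilde{T}$ and the block-diagonal $\Gl(2,k)$, and pinning down the $\nu$-powers of $\delta_P^{1/2}$, $\delta_{B_H}$ and $\delta_H$ on these subgroups so that the stated formulas come out cleanly. The one non-elementary input is the local toric multiplicity-one statement used to close the inequality $\dim(I_0)_{(H,\rho)}\leq 1$.
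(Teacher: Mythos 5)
Your overall route --- Frobenius reciprocity (lemma~\ref{lem:Frobenius}) applied to the two pieces of the Mackey filtration, with the intersections identified by lemma~\ref{lem:double_coset_intersections} --- is exactly the paper's, and your treatment of $I_1$ and of the bound $\dim(I_0)_{(H,\rho)}\leq 1$ via Waldspurger--Tunnell is fine. However, your key bookkeeping remark contains a concrete error: it is \emph{not} true that every element of $H$ has similitude factor equal to the determinant of its upper-left $2\times2$ block. This does hold on $\widetilde{T}$, on $\widetilde{N}$, and on $P\cap\mathbf{s}_2H\mathbf{s}_2^{-1}\cong\Gl(2,k)$ (there the similitude is the block determinant, so your $I_1$ computation is unaffected), but it fails on $T$: for $x_\lambda=\diag(\lambda,\lambda,1,1)$ the block determinant is $\lambda^2$ while $\lambda_G(x_\lambda)=\lambda$. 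Consequently $\delta_P^{1/2}$ is \emph{not} trivial on $P\cap H=B_H$; one has $\delta_P(x_\lambda\widetilde{t}\widetilde{n})=\vert\lambda\vert^{3}$ and $\delta_{P\cap H}(x_\lambda\widetilde{t}\widetilde{n})=\vert\lambda\vert^{2}$, and the restriction $\delta_P^{1/2}\vert_T=\nu^{3/2}$ is precisely the source of the exponents $\nu^{3/2}$ versus $\nu^{2}$ in the first displayed formula of the lemma.

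This makes your write-up internally inconsistent: your later (correct) assertion that $T$ acts on the inducing space by $\omega_\pi\chi\nu^{3/2}$ is true only because $\delta_P^{1/2}$ is non-trivial on $T$; if your triviality claim were right, the $T$-condition would read $\omega_\pi\chi=\rho\nu^2$ and the stated formula would come out wrong. Since the whole content of the first assertion is exactly this character bookkeeping, you should replace the remark by the correct modulus values above (as the paper does), noting that $\delta_P^{1/2}$ is trivial only on $\widetilde{T}\widetilde{N}$ and on $P\cap\mathbf{s}_2H\mathbf{s}_2^{-1}$. With that correction the rest of your argument --- the reduction to $T\times\widetilde{T}$ using that $T$ acts by a scalar character, the identification $\Hom_{\widetilde{T}}(\pi\boxtimes\chi,\rho\circ\lambda_G)\cong\Hom_{K^\times}(\pi,(\chi^{-1}\rho)\circ N_{K/k})$, and multiplicity one --- goes through and coincides with the paper's proof.
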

\begin{proof}
Lemma~\ref{lem:Frobenius} implies
\begin{gather*}
\dim(I_0)_{(H,\rho)}=\dim \Hom_H(\ind_{H\cap P}^H((\pi\boxtimes\chi)\otimes\delta_P^{1/2}),\rho\circ\lambda_G)\\
=\dim \Hom_{H\cap P}(((\pi\boxtimes\chi)\otimes\delta_P^{1/2}),\delta_{P\cap H}\otimes\rho\circ\lambda_G)\ .
\end{gather*}
Note that $H\cap P=T\widetilde{T}\widetilde{N}$ by lemma~\ref{lem:double_coset_intersections} and that the action of $\widetilde{N}$ is trivial on both sides.
The modulus factors are $\delta_{P}(x_\lambda \widetilde{t}\widetilde{n})=\vert\lambda\vert^3$ and $\delta_{P\cap H}(x_\lambda\widetilde{t}\widetilde{n})=\vert\lambda\vert^2$ for $x_\lambda\in T$, $\widetilde{t}\in \widetilde{T}$, $\widetilde{n}\in\widetilde{N}$.

For the second assertion, note that $\mathbf{s}_2H\mathbf{s}_2^{-1}\cap P\cong \Gl(2,k)$ by lemma~\ref{lem:double_coset_intersections},
so the modulus character $\delta_{\mathbf{s}_2H\mathbf{s}_2^{-1}\cap P}$ is trivial.
It is easy to see that $\delta_P$ restricted to $\mathbf{s}_2H\mathbf{s}_2^{-1}\cap P$ is also trivial.
The rest of the argument is analogous to the first assertion.
\end{proof}
\begin{table}\caption{Non-zero $(H,\rho\circ\lambda_G)$-functionals for non-cuspidal irreducible $\Pi$.\label{tab:sigma}}
\begin{center}
\begin{small}
\begin{tabular}{lllll}
\toprule
Type       & $\Pi\in\CCC_G$                      & Condition                  & $\rho$ \\
\midrule
\nosf{IIb} & $(\chi_1\circ\det)\rtimes\sigma$    &                            & $\chi_1\sigma$  \\
\nosf{Vb}  & $L(\nu^{1/2}\xi \St,\nu^{-1/2}\sigma)$& $\xi \neq \chi_{K/k}$    & $\sigma$  \\
\nosf{Vc}  & $L(\nu^{1/2}\xi \St,\nu^{-1/2}\xi\sigma)$& $\xi \neq \chi_{K/k}$ & $\xi\sigma$  \\
\nosf{Vd}  & $L(\nu\xi,\xi\rtimes\nu^{-1/2}\sigma)$& $\xi = \chi_{K/k}$       & $\sigma$, $\xi\sigma$   \\
\nosf{VIb} & $\tau(T,\nu^{-1/2}\sigma)$          &                            & $\sigma$ \\
\nosf{XIb} & $L(\nu^{1/2}\pi,\nu^{-1/2}\sigma)$  & $\pi_{\widetilde{T}}\neq0$ & $\sigma$ \\
\bottomrule
\end{tabular}
\end{small}
\end{center}
\end{table}

\begin{thm}\label{thm:classification_H-functionals}
For an infinite-dimensional non-cuspidal irreducible representation $\Pi\in\CCC_G$ and a smooth character $\rho$ of $k^\times$, the dimension of $(H,\rho)$-equivariant functionals is
\begin{equation*}
 \dim\Hom_{H}(\Pi,\rho\circ \lambda_G)=
 \begin{cases}
  1 & \text{if $\Pi$ and $\rho$ occur in table~\ref{tab:sigma} and the condition holds} ,\\
  0 & \text{otherwise}\ .
 \end{cases}
\end{equation*}
\end{thm}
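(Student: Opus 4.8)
The strategy is to go through the finite list of non-cuspidal, non-generic, infinite-dimensional irreducible $\Pi$ that could possibly admit an $(H,\rho\circ\lambda_G)$-functional (proposition~\ref{prop:nec_cond} already restricts us to types \nosf{IIb}, \nosf{Vb}, \nosf{Vc}, \nosf{Vd}, \nosf{VIb}, \nosf{XIb}, all equipped with an anisotropic Bessel model $\Lambda=\rho\circ N_{K/k}$), and for each such type to compute $\dim\Hom_H(\Pi,\rho\circ\lambda_G)$ exactly. The generic case contributes nothing by lemma~\ref{lem:generic_no_Hfctls}, and types \nosf{IVb}, \nosf{VIIIb}, \nosf{IXb} are already killed by lemma~\ref{IVb_noHfctl}; the finite-dimensional and cuspidal cases are outside the scope. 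So the content of the theorem is to pin down the dimension as exactly $1$ precisely in the cases of table~\ref{tab:sigma}, and $0$ otherwise within the candidate list.

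\textbf{Key steps.} First I would realize each candidate $\Pi$ as a subquotient — typically the Langlands quotient, or a constituent — of a Siegel-induced representation $I=\pi\rtimes\chi$ (or of a Borel-induced representation, writing the relevant degenerate principal series in stages through $P$). Lemma~\ref{lem:Mackey} then gives the two-step filtration $0\to I_1\to I\to I_0\to 0$ of $I$ restricted to $H$, and lemma~\ref{lem:H-fctls_I_?} computes $\dim(I_0)_{(H,\rho)}$ and $\dim(I_1)_{(H,\rho)}$ explicitly in terms of Hom-spaces over $\widetilde{T}$, over the diagonal torus $T$, and over $\Gl(2,k)$; note $(H,\rho)$-coinvariants are dual to $(H,\rho\circ\lambda_G)$-functionals, and each of the two pieces contributes at most $1$. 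The $\widetilde{T}$-factor $\dim\Hom_{\widetilde{T}}(\pi\boxtimes\chi,\rho\circ\lambda_G)$ is controlled by the theorem of Waldspurger and Tunnell (for $\pi$ a discrete series or principal series of $\Gl(2,k)$ this is the usual toric-period criterion, $\neq 0$ iff $\pi$ does not embed into the induced-from-$K^\times$ representation with the right sign, which on the level of characters is exactly the conditions $\xi\neq\chi_{K/k}$ appearing in the table, and is automatic when $\pi$ is a twist of the Steinberg or of a principal series with the conjugate-pair structure forced by $\Lambda=\rho\circ N_{K/k}$); the $T$-factor is an elementary equality of unramified characters on $k^\times$ which after normalization as in \cite[table 1]{RW} becomes $\omega_\pi\chi\nu^{3/2}=\rho\nu^2$, i.e. a numerical match one reads off the data of each type. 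For each type I would then: (a) check whether $I_0$ contributes (this is the "generic/Bessel" direction and accounts for the single functional in types \nosf{IIb}, \nosf{Vb}, \nosf{Vc}, \nosf{VIb}, \nosf{XIb}, and for one of the two in \nosf{Vd}); (b) check whether $I_1$ contributes (this is the $\Gl(2)$-period direction, $\dim\Hom_{\Gl(2,k)}(\chi\otimes\pi,\rho\circ\det)$, which is nonzero only when $\pi$ is the one-dimensional twist $(\chi^{-1}\rho)\circ\det$ — this is what produces the second functional $\rho\in\{\sigma,\xi\sigma\}$ in type \nosf{Vd}, where $\Pi=L(\nu\xi,\xi\rtimes\nu^{-1/2}\sigma)$ genuinely involves such a one-dimensional piece); and (c) pass from the Hom-space of $I$ to that of the actual irreducible $\Pi$. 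Step (c) requires identifying where $\Pi$ sits in $I$ and excluding the possibility that the $H$-functional comes from the complementary subquotient rather than from $\Pi$ itself: here I would use exactness of the Bessel functor together with the perfectness of $\widetilde{\Pi}$ (lemma~\ref{Bessel_module_perfect}) and the one-dimensionality $\dim\widetilde{\Pi}_{(T,\chi)}=1$, exactly as in the proof of proposition~\ref{prop:K_H_spherical_Pi}, to track the functional through short exact sequences; in the borderline cases this uses the $\Ext^1$-computations for $H/Z_G$-modules of \cite{Orlik_Ext} and \cite{Schneider-Stuhler} that are already invoked there. Finally, for the types not in the table but allowed by proposition~\ref{prop:nec_cond} at first glance (e.g. the excluded sub-cases $\xi=\chi_{K/k}$ in \nosf{Vb}, \nosf{Vc}, or $\pi_{\widetilde{T}}=0$ in \nosf{XIb}), the same computation shows both $I_0$- and $I_1$-contributions vanish, giving dimension $0$.

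\textbf{Main obstacle.} The genuinely delicate part is step (c) combined with the precise Waldspurger–Tunnell input in the mixed cases — in particular type \nosf{Vd}, where the induced representation breaks into several constituents, two functionals on $I$ are present (one from $I_0$, one from $I_1$), and one must verify that \emph{both} survive on the specific Langlands quotient $L(\nu\xi,\xi\rtimes\nu^{-1/2}\sigma)$ rather than leaking onto the other constituents. Disentangling this requires carefully composing the filtration of lemma~\ref{lem:Mackey}, the Jacquet-module/Bessel-module exact sequences, and the dimension count $\dim\widetilde{\Pi}_{(T,\rho)}=1$; a parallel subtlety arises in type \nosf{XIb}, where the condition $\pi_{\widetilde{T}}\neq 0$ is exactly the Waldspurger–Tunnell nonvanishing for the cuspidal $\Gl(2,k)$-datum $\pi$, and one must show this condition is equivalent to nonvanishing of the $H$-functional on the Langlands quotient. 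Everything else — the $\Hom$-computations over tori, the modulus-character bookkeeping, the numerical character matches — is routine given lemmas~\ref{lem:Frobenius}, \ref{lem:Mackey} and \ref{lem:H-fctls_I_?} and the normalization tables of \cite{RW} and \cite{Roberts-Schmidt}.
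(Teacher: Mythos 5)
Your overall framework is the same as the paper's: restrict to the candidate types via proposition~\ref{prop:nec_cond}, realize each $\Pi$ as a constituent of a Siegel-induced $I$, and count $(H,\rho)$-coinvariants of $I$ through lemmas~\ref{lem:Mackey} and~\ref{lem:H-fctls_I_?}. But there is a genuine gap exactly at the point you yourself flag as the main obstacle: the second functional in type \nosf{Vd} (after normalizing $\sigma=1$, the case $\rho=\xi$ with $\xi=\chi_{K/k}$). There only the open-cell term is available: $(I_0)_{(H,\xi)}=0$ while $(I_1)_{(H,\xi)}$ is one-dimensional, and since the coinvariant functor is only right exact, the sequence $(I_1)_{(H,\xi)}\to I_{(H,\xi)}\to (I_0)_{(H,\xi)}\to 0$ gives no lower bound for $\dim I_{(H,\xi)}$; equivalently, the $H$-functional on the submodule $I_1$ need not extend to $I$, so it need not "survive" on the quotient $\Pi$ at all. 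Your proposed remedy (Bessel functor exactness, perfectness, $\dim\widetilde{\Pi}_{(T,\chi)}=1$, and the $\Ext^1$-computations as in proposition~\ref{prop:K_H_spherical_Pi}) yields upper bounds and descent statements, but no mechanism for this extension problem, so existence for $\rho=\xi$ is not proved by your outline. The paper avoids the issue entirely: since $\Pi\cong\xi\otimes\Pi$ in type \nosf{Vd}, it runs the closed-cell ($I_0$) argument on $\xi\otimes I$ instead of $I$, so both functionals are produced by $I_0$-terms and the $I_1$-contribution is never needed for existence.

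Two further discrepancies worth fixing. First, your step (c) is much simpler than you make it: in each quotient realization the kernel of $I\to\Pi$ is generic (type \nosf{IIa}, \nosf{Va}, \nosf{XIa}) or of type \nosf{Vc} with $\xi=\chi_{K/k}$, which has no anisotropic Bessel model and hence no $H$-functional by proposition~\ref{prop:nec_cond}; therefore its $(H,\rho)$-coinvariants vanish and $I_{(H,\rho)}\cong\Pi_{(H,\rho)}$ directly — no Ext computations or Bessel-module tracking are required (for \nosf{VIb} the paper uses one submodule realization for existence at $\rho=1$ and a second, quotient, realization for vanishing at $\rho\neq1$). Second, the exact multiplicity one in the table cases does not come from the two-step filtration (which only controls $I$); the paper gets it from the embedding $\Hom_H(\Pi,\rho\circ\lambda_G)\hookrightarrow\Hom_{B_H}(\Pi,\rho\circ\lambda_G)\cong\Hom_T(\widetilde{\Pi},\rho)$ together with $\dim\widetilde{\Pi}_{(T,\rho)}=\dim\widetilde{\Pi}_{(S,\psi)}\leq1$ from lemma~\ref{Bessel_module_perfect} and uniqueness of Bessel functionals; you cite these ingredients but never assemble them into the needed upper bound.
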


\begin{proof}
Lemma~\ref{lem:nec_conds} yields
$\dim\Hom_H(\Pi,\rho\circ\lambda_G)\leq\dim\Hom_{B_H}(\Pi,\rho\circ\lambda_G)=\dim\widetilde{\Pi}_{(T,\rho)}$.
By lemma~\ref{Bessel_module_perfect} and the uniqueness of Bessel models \cite[thm.~6.3.2]{Roberts-Schmidt_Bessel}, the dimension is bounded by $\dim\widetilde{\Pi}_{(T,\rho)}=\dim\widetilde{\Pi}_{S,\psi}\leq1$.

By proposition~\ref{prop:nec_cond}, an $H$-functional can only exist if $\Pi$ is of the indicated type and $\Lambda=\rho\circ N_{K/k}$ yields a Bessel model for $\Pi$.
For these types it remains to be shown that $H$-functionals exist for every $\rho$ in the table and no other $\rho$.
By a twist we can assume that $\Pi$ is normalized as in \cite[table~1]{RW}, i.e.\ we assume $\sigma=1$ except for type \nosf{IIb} where we set $\sigma\chi_1=1$.
For each case we consider a suitable Siegel induced representation $I$.
Right-exactness of the coinvariant functor then gives an exact sequence with first and third term given by lemma~\ref{lem:H-fctls_I_?}:
\begin{equation*}
 \cdots\longrightarrow (I_1)_{(H,\rho)}\longrightarrow I_{(H,\rho)} \longrightarrow (I_0)_{(H,\rho)}\longrightarrow 0\ .
\end{equation*} 
For type \nosf{IIb}, $\Pi$ is a quotient of $I=\nu^{1/2}(\chi_1\times\chi_1^{-1})\rtimes\nu^{-1/2}$ \cite[table~2]{RW}.
Right exactness and lemma~\ref{lem:H-fctls_I_?} imply $\dim I_{(H,\rho)}=\dim (I_0)_{(H,\rho)}=\dim \Hom_{T}(\nu^2,\rho\nu^2)$.
The kernel of the projection $I\to\Pi$ is generic of type \nosf{IIa} and does not admit any $H$-functional, so
$\dim I_{(H,\rho)} = \dim\Pi_{(H,\rho)}$. Hence $\Pi$ admits an $(H,\rho\circ\lambda_G)$-functional exactly for $\rho=1$.

If $\Pi$ is of type \nosf{Vb}, it is a quotient of $I=\St(\nu^{1/2}\xi)\rtimes\nu^{-1/2}$ and the argument is analogous.
The condition $\xi\neq\chi_{K/k}$ implies $\Hom_{\widetilde{T}}(\St(\xi),1)\neq0$ by the theory of Waldspurger and Tunnell.
Type \nosf{Vc} is a twist of \nosf{Vb}.
For type \nosf{XIb} choose $I=\nu^{1/2}\pi \rtimes\nu^{-1/2}$ and apply the analogous argument.

$\Pi$ of type \nosf{VIb} is a submodule of $I=(\nu^{1/2}\circ\det) \rtimes\nu^{-1/2}$. Right-exactness and lemma~\ref{lem:H-fctls_I_?} show
that $I$ admits an $(H,\rho\circ\lambda_G)$-functional for $\rho=1$.
On the other hand, $\Pi$ is a quotient of $I=(\nu^{-1/2}\circ\det) \rtimes\nu^{1/2}$, so $\dim \Pi_{(H,\rho)} \leq \dim I_{(H,\rho)}=0$ for $\rho\neq1$.

$\Pi$ of type \nosf{Vd} is a quotient of $I=(\nu^{1/2}\xi\circ\det)\rtimes \nu^{-1/2}$ with $\xi=\chi_{K/k}$.
By lemma~\ref{lem:H-fctls_I_?},
$(I_0)_{(H,\rho)}$ is non-zero exactly for $\rho=1$ and $\dim(I_1)_{(H,\rho)}$ is non-zero exactly for $\rho=\xi$. 
The kernel of $I\to\Pi$ is of type \nosf{Vc} and does not admit any $(H,\rho)$-functional for $\xi=\chi_{K/k}$.
Therefore $\dim I_{(H,\rho)}=\dim\Pi_{(H,\rho)}$ is non-zero for $\rho=1$ and is zero for every smooth $\rho\neq1,\xi$.
Since $\Pi\cong\xi\otimes\Pi$, we can apply the analogous argument to $\xi\otimes I$ instead of $I$ 
and this implies the statement.
\end{proof}

\section{Conclusion}\label{s:tables}
We review the results of \cite{Danisman, Danisman_Annals, Danisman2, Danisman3, RW, W_Excep, Subregular}.

\textbf{Table~\ref{tab:exceptional_poles2}}
lists the smooth characters $\Lambda$ of $K^\times$ that yield anisotropic Bessel models \cite{Roberts-Schmidt_Bessel} and the associated regular and exceptional $L$-factors.
For non-cuspidal irreducible $\Pi\in {\cal C}_G$ we use the classification symbols introduced by Sally and Tadi\'{c} \cite{Sally-Tadic} and Roberts and Schmidt \cite{Roberts-Schmidt}.
The term ``all'' means ``all smooth $\Lambda$ that coincide on $k^\times$ with the central character $\omega$ of $\Pi$''.
The regular $L$-factor has been determined by Dani\c{s}man~\cite{Danisman, Danisman2, Danisman3} in the context of local function fields
and analogous arguments yield the corresponding results for local number fields.
For non-cuspidal $\Pi$ the exceptional factor is given by corollary~\ref{cor:final}.
For cuspidal irreducible $\Pi\in\CCC_G$ and odd residue characteristic of $k$, the $L$-factor has been determined by Dani\c{s}man~\cite{Danisman_Annals} in terms of the anisotropic theta lift:
Every non-generic cuspidal $\Pi$ occurs as an anisotropic theta-lift $\theta(\pi_1^{JL},\pi_2^{JL})$
from a pair of finite-dimensional irreducible representations $\pi_1,\pi_2$ of $\CCC_{\Gl(2)}(\omega)$ in the discrete series.
This pair $(\pi_1,\pi_2)$ is uniquely determined by $\Pi$ up to permutation.
For typographical reasons we set $\mu=1$.

\textbf{Table~\ref{tab:spinor_factors}}
lists the full $L$-factors $L^{\PS}(s,\Pi,\Lambda,1)$ attached to arbitrary Bessel models \cite{Roberts-Schmidt_Bessel}.
Split Bessel models are determined by characters $\Lambda=\rho\boxtimes\rho^\divideontimes$ of 
$k^\times\times k^\times$ with $\rho^\divideontimes=\omega/\rho$.
The regular and exceptional $L$-factors for split Bessel models have been determined by the authors \cite{RW, W_Excep, Subregular}.
We thus show that the $L$-factor of Piatetskii-Shapiro is indeed independent of the choice of any Bessel model and coincides with the Galois factor defined by Roberts and Schmidt, see table~A.8 of \cite{Roberts-Schmidt}.
Our results are in accordance with the local Langlands correspondence for $\GSp(4)$ as shown by Gan and Takeda \cite{Gan-Takeda}. Note that they use a different construction of spinor $L$-factors.

\textbf{Local endoscopic $L$-packets}
$\{\Pi_\pm(\pi_1,\pi_2)\}$ in the sense of \cite{W_Endo} are attached to pairs of unitary generic irreducible $\pi_1,\pi_2\in\CCC_{\Gl(2,k)}(\omega)$ with common central character $\omega$, see table~\ref{tab:local_lift}.
The generic constituent $\Pi_+(\pi_1,\pi_2)$ has spinor $L$-factor
\begin{equation*}
 L^\PS(s,\Pi_+(\pi_1,\pi_2),\Lambda,\mu)=L(s,\pi_1\otimes\mu)L(s,\pi_2\otimes\mu)
\end{equation*}
for each Bessel model of $\Pi_+$.
If $\pi_1$ and $\pi_2$ are in the discrete series, there is also a non-generic constituent $\Pi_-(\pi_1,\pi_2)$.
Every Bessel model attached to it is anisotropic. If $k$ has odd residue characteristic,
the spinor $L$-factor is
\begin{equation*}
 L^\PS(s,\Pi_-(\pi_1,\pi_2),\Lambda,\mu)=L(s,\pi_1\otimes\mu)L(s,\pi_2\otimes\mu)\ .
\end{equation*}

\begin{table}
\centering
\begin{small}
 \begin{tabular}{llllll}
\toprule
$\pi_{1}$         & $\pi_{2}$                & $\Pi_+(\pi_1,\pi_2)$                                    &Type & $\Pi_-(\pi_1,\pi_2)$ & Type\\\midrule
 $\mu_1\times\mu_2$ & $\mu_3\times\mu_4$     & $\mu_3\mu_1^{-1}\times\mu_4\mu_1^{-1}\rtimes\mu_1$       &\nosf{I}    & ---                    &\\
 $\mu_1\times\mu_2$ & $\mu\cdot\St $         & $\mu\mu_1^{-1}\cdot \St_{\Gl(2)}\rtimes\mu_1$             &\nosf{IIa}  & ---                    &\\
 $\mu_1\times\mu_2$ & cuspidal                & $\mu_3^{-1}\cdot\pi_2\rtimes\mu_3$                      &\nosf{X}    & ---                    &\\
$\mu\cdot\St$     & $\mu\cdot \St$            & $\tau(S,\nu^{-1/2}\mu)$                                 &\nosf{VIa}  & $\tau(T,\nu^{-1/2}\mu)$                          &\nosf{VIb}\\
$\xi\mu\cdot\St$  & $\mu\cdot \St$            & $\delta(\xi\nu^{1/2}\cdot\St\rtimes\mu\nu^{-1/2})$      &\nosf{Va}   & $\theta_-(\xi\mu \cdot\St,\mu\cdot\St)$ cuspidal &\nosf{Va*}\\
cuspidal          & $\mu\cdot \St$            &$\delta(\mu^{-1}\nu^{1/2}\cdot\Pi_1\rtimes\mu\nu^{-1/2})$&\nosf{XIa}  & $\theta_-(\pi_1,\mu\cdot \St)$          cuspidal &\nosf{XIa*}\\
cuspidal          & cuspidal $\cong\pi_1$     & $\tau(S,\pi_1)$                                         &\nosf{VIIIa}& $\tau(T,\pi_1)$                                  &\nosf{VIIIb}\\
cuspidal          & cuspidal $\not\cong\pi_1$ & cuspidal                                                &            & $\theta_-(\pi_1,\pi_2)$ cuspidal                 &\\
\bottomrule
\end{tabular}
\end{small}
\caption{$L$-packet $\{\Pi_\pm(\pi_1,\pi_2)\}$ attached to endoscopic lifts.\label{tab:local_lift}}
\end{table}

\textbf{Arthur packets for Saito-Kurokawa lifts} $\{\Pi^\mathrm{SK}_{\pm}(\pi)\}$ in the sense of Piatetski-Shapiro \cite{PS-SK}
are attached to unitary generic irreducible $\pi\in\CCC_{\Gl(2)}$ with trivial central character, see table~\ref{tab:SK_local}.
The constituents have been determined explicitly by Schmidt \cite{Schmidt_local_SK}.
For each Bessel model, the non-tempered constituent $\Pi^\mathrm{SK}_{+}(\pi)$ has spinor $L$-factor
\begin{equation*}
 L^\PS(s,\Pi^\mathrm{SK}_{+}(\pi),\Lambda,\mu) = L(s,\pi\otimes\mu)L(s,\nu^{1/2}\mu)L(s,\nu^{-1/2}\mu)\ .
\end{equation*}
The tempered constituent $\Pi^\mathrm{SK}_{-}(\pi)$ only exists for $\pi$ in the discrete series and
is isomorphic to $\Pi_-(\pi,\St)$.
If $k$ has odd residue characteristic, for each Bessel model the spinor $L$-factor is
\begin{equation*}
 L^\PS(s,\Pi^\mathrm{SK}_{-}(\pi),\Lambda,\mu) = L(s,\pi\otimes\mu)L(s,\nu^{1/2}\mu)\ .
\end{equation*}

\begin{table}
\centering
\begin{small}
 \begin{tabular}{llllll}
\toprule
$\pi$   & $\Pi^{\mathrm{SK}}_+(\pi)$                          & Type         & $\Pi^{\mathrm{SK}}_-(\pi)$            & Type  \\\midrule
$\mu\times\mu^{-1}$    & $(\mu\circ\det)\rtimes\mu^{-1}$      & \nosf{IIb}   & ---                                   &       \\
$St$                   & $L(\nu^{1/2}   \cdot \St,\nu^{-1/2})$& \nosf{VIc}   & $\tau(T,\nu^{-1/2})$                  &\nosf{VIb}  \\
$\xi \cdot St$         & $L(\nu^{1/2}\xi\cdot \St,\nu^{-1/2})$& \nosf{Vbc}   & $\theta_-(\xi \cdot\St,\St)$ cuspidal &\nosf{Va*}   \\
cuspidal               & $L(\nu^{1/2}\cdot\pi,\nu^{-1/2})$    & \nosf{XIb}   & $\theta_-(\pi,\St)$          cuspidal &\nosf{XIa*}  \\
\bottomrule
\end{tabular}
\caption{Arthur packet $\{\Pi^{\mathrm{SK}}_\pm(\pi)\}$ attached to Saito-Kurokawa lifts.\label{tab:SK_local}}
\end{small}
\end{table}

\begin{footnotesize}
\bibliographystyle{amsalpha}

\end{footnotesize}
 
\vskip 12 pt
\begin{footnotesize}
\centering{Mirko R\"osner\\ Mathematisches Institut, Universit\"at Heidelberg\\ Im Neuenheimer Feld 205, 69120 Heidelberg\\ email: mroesner@mathi.uni-heidelberg.de}

\vskip 8 pt
\centering{Rainer Weissauer\\ Mathematisches Institut, Universit\"at Heidelberg\\ Im Neuenheimer Feld 205, 69120 Heidelberg\\ email: weissauer@mathi.uni-heidelberg.de}

\end{footnotesize}

\begin{landscape}
\begin{table}\caption{Spinor $L$-factors for anisotropic Bessel models.\label{tab:exceptional_poles2}}
\begin{footnotesize}
\begin{center}
\begin{tabular}{lllll}
\toprule
Type        & $\Pi\in\CCC_{\GSp(4,k)}$  &  $\Lambda$   & $L_{\reg}^{\PS}(s,\Pi,\Lambda,1)$ & $L_{\ex}^{\PS}(s,\Pi,\Lambda,1)$\\
\midrule
\nosf{I}    & $\chi_1\times\chi_2\rtimes\sigma$  &  all
                 & $L(s,\sigma)L(s,\chi_1\sigma)L(s,\chi_2\sigma)L(s,\chi_1\chi_2\sigma)$ & $1$\\
\nosf{IIa}  & $\chi \St\rtimes\sigma$      &  all  $\neq(\chi\sigma)\circ N_{K/k}$           & $L(s,\sigma)L(s,\chi^2\sigma)L(s,\nu^{1/2}\chi\sigma)$ & $1$\\
\nosf{IIb}  &$\chi\mathbf{1}\rtimes\sigma$& $(\chi\sigma)\circ N_{K/k}$ & $L(s,\sigma)L(s,\chi^2\sigma)L(s,\nu^{-1/2}\chi\sigma)$ &
$L(s,\chi\sigma\nu^{1/2})$\\
\nosf{IIIa} &$\chi\rtimes \sigma \St$     & all                & $L(s,\nu^{1/2}\chi\sigma)L(s,\nu^{1/2}\sigma)$ & $1$\\
\nosf{IIIb} &$\chi\rtimes \sigma\mathbf{1}$& none             & ---                                            & ---\\     
\nosf{IVa}  &$\sigma \St_{G}$             & all $\neq\sigma\circ N_{K/k}$  & $L(s,\nu^{3/2}\sigma)$ & $1$\\
\nosf{IVb}  &$L(\nu^2,\nu^{-1}\sigma \St)$& $\sigma\circ N_{K/k}$ & $L(s,\nu^{3/2}\sigma)L(s,\nu^{-1/2}\sigma)$ & $1$\\
\nosf{IVc}  &$L(\nu^{3/2} \St,\nu^{-3/2}\sigma)$& none & ---    & ---\\
\nosf{IVd}  &$\sigma\mathbf{1}_G$        & none           & --- & ---\\
\nosf{Va}   &$\delta([\xi,\nu\xi],\nu^{-1/2}\sigma)$&  all $\neq\sigma\circ N_{K/k},\ \xi\sigma\circ N_{K/k}$ & $L(s,\nu^{1/2}\sigma)L(s,\nu^{1/2}\xi\sigma)$ & $1$ \\
\nosf{Vb}   &$L(\nu^{1/2}\xi \St, \nu^{-1/2}\sigma)$ & $\sigma\circ N_{K/k}$ if $\xi\neq\chi_{K/k}$    & $L(s,\nu^{-1/2}\sigma)L(s,\nu^{1/2}\xi\sigma)$ & $L(s,\nu^{1/2}\sigma)$\\
\nosf{Vc}   &$L(\nu^{1/2}\xi \St, \nu^{-1/2}\xi\sigma)$ & $(\xi\sigma)\circ N_{K/k}$ if $\xi\neq\chi_{K/k}$ & $L(s,\nu^{1/2}\sigma)L(s,\nu^{-1/2}\xi\sigma)$ & $L(s,\nu^{1/2}\xi\sigma)$\\
\nosf{Vd}   &$L(\nu\xi,\xi\rtimes\nu^{-1/2}\sigma)$ & $\sigma\circ N_{K/k}$ if $\xi=\chi_{K/k}$  & $L(s,\nu^{-1/2}\sigma)L(s,\nu^{-1/2}\xi\sigma)$ & $L(s,\nu^{1/2}\sigma)L(s,\nu^{1/2}\xi\sigma)$\\
\nosf{VIa}  &$\tau(S,\nu^{-1/2}\sigma)$  & all $\neq\sigma\circ N_{K/k}$  & $L(s,\nu^{1/2}\sigma)^2$ & $1$\\
\nosf{VIb}  &$\tau(T,\nu^{-1/2}\sigma)$  & $\sigma\circ N_{K/k}$& $L(s,\nu^{1/2}\sigma)$ & $L(s,\nu^{1/2}\sigma)$\\
\nosf{VIc}  &$L(\nu^{1/2} \St,\nu^{-1/2}\sigma)$ &  none   & --- & ---\\
\nosf{VId}  &$L(\nu,1\rtimes\nu^{-1/2}\sigma)$ &   none   & --- & ---\\
\nosf{VII}  &$\chi\rtimes\pi$              & all          & $1$ & $1$\\
\nosf{VIIIa}&$\tau(S,\pi)$                 & all with $\Hom_{\widetilde{T}}(\pi,\Lambda)\neq0$& $1$ & $1$\\
\nosf{VIIIb}&$\tau(T,\pi)$                 & all with $\Hom_{\widetilde{T}}(\pi,\Lambda)=0$   & $1$ & $1$\\
\nosf{IXa}  &$\delta(\nu\xi,\nu^{-1/2}\pi(\mu))$& all, except $\mu,\mu'$ if $\xi=\chi_{K/k}$       & $1$ & $1$\\
\nosf{IXb}  & $L(\nu\xi,\nu^{-1/2}\pi(\mu))$    & $\mu,\mu'$ if $\xi=\chi_{K/k}$         & $1$ & $1$\\
\nosf{X}    &$\pi\rtimes\sigma$            & all with $\Hom_{\widetilde{T}}(\sigma\pi,\Lambda)\neq0$               & $L(s,\sigma)L(s,\omega_{\pi}\sigma)$ & $1$\\
\nosf{XIa}  &$\delta(\nu^{1/2}\pi,\nu^{-1/2}\sigma)$ &  all $\neq\sigma\circ N_{K/k}$ if $\Hom_{\widetilde{T}}(\sigma\pi,\Lambda)\neq0$  & $L(s,\nu^{1/2}\sigma)$  & $1$ \\
\nosf{XIb}  &$L(\nu^{1/2}\pi,\nu^{-1/2}\sigma)$ &   $\sigma\circ N_{K/k}$ if $\Hom_{\widetilde{T}}(\pi,\C)\neq0$       & $L(s,\nu^{-1/2}\sigma)$ & $L(s,\nu^{1/2}\sigma)$\\
            & cuspidal generic         & see \cite{Prasad_TaklooBighash}   & $1$ & $1$  \\
\nosf{Va*}  & $\theta_-(\sigma\St,\xi\sigma\St)$ & $\sigma\circ N_{K/k}$ for $\xi=\chi_{K/k}$& $1$ & $L(s,\nu^{1/2}\sigma)L(s,\nu^{1/2}\xi\sigma)$ \\
\nosf{XIa*} & $\theta_-(\sigma\St,\sigma\pi)$, & $\sigma\circ N_{K/k}$ for $\Hom_{\widetilde{T}}(\pi^\mathrm{JL},1)\neq0$  & $1$ & $L(s,\nu^{1/2}\sigma)$ \\
            & other cuspidal non-generic & see \cite{Prasad_TaklooBighash} & $1$ & $1$ \\\bottomrule
\end{tabular}
\end{center}
\end{footnotesize}
In lines \nosf{IXa} and \nosf{IXb}, $\pi(\mu)$ denotes the cuspidal irreducible representation of $\Gl(2)$ attached to a smooth character $\mu$ of $L^\times$ with Galois conjugate $\mu'\neq\mu$ where $L/k$ is the quadratic extension corresponding to $\xi$, compare \cite{Roberts-Schmidt_Bessel}.
In lines \nosf{XIa}, \nosf{XIb} and \nosf{XIa*}, $\pi$ is a cuspidal irreducible representation of $\Gl(2)$ with trivial central character.
For the last three lines, the exceptional factor has only been determined under the restriction to odd residue characteristic of $k$ \cite{Danisman_Annals}.
\end{table}
\end{landscape}

\begin{table}
\caption{Spinor $L$-factors for irreducible smooth representations $\Pi$ of $\GSp(4,k)$.\label{tab:spinor_factors}}
\begin{footnotesize}
\begin{center}
\begin{tabular}{llll}
\toprule
Type & $\Pi\in\CCC_{\GSp(4,k)}$                   & $L^{\PS}(s,\Pi,\Lambda,1)$\\
\midrule
\nosf{I} & $\chi_1\times\chi_2\rtimes\sigma$      & $L(s,\sigma)L(s,\chi_1\sigma)L(s,\chi_2\sigma)L(s,\chi_1\chi_2\sigma)$\\
\nosf{IIa}& $\chi \St\rtimes\sigma$               & $L(s,\sigma)L(s,\chi^2\sigma)L(s,\nu^{1/2}\chi\sigma)$\\
\nosf{IIb}  &$\chi\mathbf{1}\rtimes\sigma$              &$L(s,\sigma)L(s,\chi^2\sigma)L(s,\nu^{-1/2}\chi\sigma) L(s,\nu^{1/2}\chi\sigma)$\\
\nosf{IIIa} &$\chi\rtimes \sigma \St$                   & $L(s,\nu^{1/2}\chi\sigma)L(s,\nu^{1/2}\sigma)$\\
\nosf{IIIb} &$\chi\rtimes \sigma\mathbf{1}$             & $L(s,\nu^{1/2}\chi\sigma)L(s,\nu^{1/2}\sigma)L(s,\nu^{-1/2}\chi\sigma)L(s,\nu^{-1/2}\sigma)$\\
\nosf{IVa}  &$\sigma \St_{G}$                           & $L(s,\nu^{3/2}\sigma)$\\
\nosf{IVb}  &$L(\nu^2,\nu^{-1}\sigma \St)$              & $L(s,\nu^{3/2}\sigma)L(s,\nu^{-1/2}\sigma)$\\
\nosf{IVc}  &$L(\nu^{3/2} \St,\nu^{-3/2}\sigma)$        & $L(s,\nu^{3/2}\sigma)L(s,\nu^{1/2}\sigma)L(s,\nu^{-3/2}\sigma)$\\
\nosf{IVd}  &$\sigma\mathbf{1}_G$                       & no Bessel model\\
\nosf{Va}   &$\delta([\xi,\nu\xi],\nu^{-1/2}\sigma)$    & $L(s,\nu^{1/2}\sigma)L(s,\nu^{1/2}\xi\sigma)$\\
\nosf{Vb}   &$L(\nu^{1/2}\xi \St, \nu^{-1/2}\sigma)$    & $L(s,\nu^{1/2}\xi\sigma)
L(s,\nu^{1/2}\sigma)L(s,\nu^{-1/2}\sigma)$\\
\nosf{Vc}   &$L(\nu^{1/2}\xi \St, \nu^{-1/2}\xi\sigma)$ & $L(s,\nu^{1/2}\sigma)L(s,\nu^{1/2}\xi\sigma)L(s,\nu^{-1/2}\xi\sigma)$\\
\nosf{Vd}   &$L(\nu\xi,\xi\rtimes\nu^{-1/2}\sigma)$     & $L(s,\nu^{1/2}\sigma)L(s,\nu^{1/2}\xi\sigma)L(s,\nu^{-1/2}\sigma)L(s,\nu^{-1/2}\xi\sigma)$\\
\nosf{VIa}  &$\tau(S,\nu^{-1/2}\sigma)$                 & $L(s,\nu^{1/2}\sigma)^2$\\
\nosf{VIb}  &$\tau(T,\nu^{-1/2}\sigma)$                 & $L(s,\nu^{1/2}\sigma)^2$\\
\nosf{VIc}  &$L(\nu^{1/2} \St,\nu^{-1/2}\sigma)$        & $L(s,\nu^{1/2}\sigma)^2L(s,\nu^{-1/2}\sigma)$\\
\nosf{VId}  &$L(\nu,1\rtimes\nu^{-1/2}\sigma)$          & $L(s,\nu^{1/2}\sigma)^2L(s,\nu^{-1/2}\sigma)^2$\\
\nosf{VII}  &$\chi\rtimes\pi$                           & $1$\\
\nosf{VIIIa}&$\tau(S,\pi)$                              & $1$\\
\nosf{VIIIb}&$\tau(T,\pi)$                              & $1$\\
\nosf{IXa}  &$\delta(\nu\xi,\nu^{-1/2}\pi)$             & $1$\\
\nosf{IXb}  & $L(\nu\xi,\nu^{-1/2}\pi)$                 & $1$\\
\nosf{X}    &$\pi\rtimes\sigma$                         & $L(s,\sigma)L(s,\omega_{\pi}\sigma)$\\
\nosf{XIa}  &$\delta(\nu^{1/2}\pi,\nu^{-1/2}\sigma)$    & $L(s,\nu^{1/2}\sigma)$\\
\nosf{XIb}  &$L(\nu^{1/2}\pi, \nu^{-1/2}\sigma)$        & $L(s,\nu^{1/2}\sigma)L(s,\nu^{-1/2}\sigma)$\\
            & \ cuspidal generic                        & $1$                   \\
\nosf{Va*}  & $\theta_-(\sigma\St,\xi\sigma\St)$      & $L(s,\nu^{1/2}\sigma)L(s,\nu^{1/2}\xi\sigma)$\\
\nosf{XIa*} & $\theta_-(\sigma\St,\sigma\pi)$         & $L(s,\nu^{1/2}\sigma)$\\
            & other cuspidal non-generic              & $1$\\
            \bottomrule
\end{tabular}
\end{center}
\end{footnotesize}
\begin{small}
This table lists Piatetski-Shapiro's spinor $L$-factors attached to infinite-dimensional irreducible smooth representations $\Pi$ of $\GSp(4,k)$ and every Bessel model $(\Lambda,\psi)$ of $\Pi$, split or anisotropic.
For the last three lines, this has only been shown
under the restriction to odd residue characteristic of $k$.
\end{small}
\end{table}

\end{document}